\newtheorem{corollary}{Corollary}
\newtheorem{definition}{Definition}
\newtheorem{lemma}{Lemma}
\newtheorem{theorem}{Theorem}
\newtheorem*{ntheorem}{Theorem}
\DeclareMathOperator{\tr}{tr}
\DeclareMathOperator{\Det}{Det}
\DeclareMathOperator{\Tr}{Tr}
\begin{document}

\title{Partial determinants of Kronecker products}

\author{Yorick Hardy}
\address{
 School of Mathematics,
 University of the Witwatersrand,
 Johannesburg 2050, South Africa
}

\keywords{%
 Kronecker product;
 partial determinant;
 partial trace;
 partial transpose
}

\subjclass[2010]{%
 15A15;
 15A30;
 15A69
}

\begin{abstract}
 Let $\det_2(A)$ be the block-wise determinant (partial determinant).
 We consider the condition for completing the determinant
 \begin{equation*}
  \det(\det_2(A)) = \det(A),
 \end{equation*}
 and characterize the case when for an arbitrary
 Kronecker product $A$ of matrices over an arbitrary field. Further
 insisting that $\det_2(AB)=\det_2(A)\det_2(B)$, for Kronecker products
 $A$ and $B$, yields a multiplicative monoid of matrices.
 This leads to a determinant-root
 operation $\Det$ which satisfies
 \begin{math}
  \Det(\Det_2(A)) = \Det(A)
 \end{math}
 when $A$ is a Kronecker product of matrices for which $\Det$ is defined.
\end{abstract}

\maketitle

\section{Introduction}

Let $\{\,E_{ij}^{[n]}\,:\,i,j\in\{1,\ldots,n\}\,\}$ denote the standard basis
in $M_n(\mathbb{F})$, the algebra of $n\times n$ matrices over a field $\mathbb{F}$.
In other words, $\smash{E_{ij}^{[n]}}$ is an $n\times n$ matrix with a 1 in row $i$ and
column $j$ and all other entries are zero. The standard basis is given
entry-wise by $(\smash{E_{ij}^{[n]}})_{kl}=\delta_{ik}\delta_{jl}$, where $\delta_{ik}$
is the Kronecker delta
\begin{equation*}
 \delta_{ik} = \begin{cases} 1 & i=k, \\ 0 & i\neq k. \end{cases}
\end{equation*}
Let $\otimes$ denote the Kronecker product of matrices, i.e. for $A\in M_m(\mathbb{F})$
and $B\in M_n(\mathbb{F})$ the matrix $A\otimes B$ is given block-wise by
\begin{equation*}
 A\otimes B := \begin{pmatrix}
                (A)_{11}B & (A)_{12}B & \cdots & (A)_{1m} B \\
                (A)_{21}B & (A)_{22}B & \cdots & (A)_{2m} B \\
                 \vdots   & \vdots    & \ddots & \vdots   \\
                (A)_{m1}B & (A)_{m2}B & \cdots & (A)_{mm} B 
               \end{pmatrix}.
\end{equation*}

Choi gives the definition of the partial determinants $\det_1(A)$ and $\det_2(B)$ as follows \cite{choi2017}.

\begin{definition}
 Let $A,B\in M_{mn}(\mathbb{F}) = M_m(M_n(\mathbb{F})) = M_m(\mathbb{F})\otimes M_n(\mathbb{F})$
 with
 \begin{equation*}
  A = \sum_{i,j=1}^n A_{ij}\otimes E_{ij}^{[n]}, \qquad
  B = \sum_{i,j=1}^m E_{ij}^{[m]}\otimes B_{ij}.
 \end{equation*}
 The partial determinants $\det_1(A)$ and $\det_2(B)$ are given by
 \begin{equation*}
  \det_1(A) := \sum_{i,j=1}^n \det(A_{ij}) E_{ij}^{[n]}, \qquad
  \det_2(B) := \sum_{i,j=1}^m \det(B_{ij}) E_{ij}^{[m]}.
 \end{equation*}
\end{definition}

This definition is analogous to that of the partial trace.
The partial trace features prominently in quantum information theory (see for example \cite{carlen}).

\begin{definition}
 Let $A,B\in M_{mn}(\mathbb{F})$ with
 \begin{equation*}
  A = \sum_{i,j=1}^n A_{ij}\otimes E_{ij}^{[n]}, \qquad
  B = \sum_{i,j=1}^m E_{ij}^{[m]}\otimes B_{ij}.
 \end{equation*}
 We define the partial traces
 \begin{equation*}
  \tr_1(A) := \sum_{i,j=1}^n \tr(A_{ij}) E_{ij}^{[n]}, \qquad
  \tr_2(B) := \sum_{i,j=1}^m \tr(B_{ij}) E_{ij}^{[m]}.
 \end{equation*}
\end{definition}

The partial trace is linear while the partial determinant is not linear.
Additionally, the partial trace is ``partial'' in the sense that the trace can be completed
\begin{equation*}
 \tr(\tr_2(A))=\tr(\tr_1(A))=\tr(A).
\end{equation*}
In general, $\det(\det_2(A))\neq\det(A)$.
Thompson showed that if $B\in M_{mn}(\mathbb{C})$ is positive definite, then 
$\det(\det_2(B)) \geq\det(B)$ and that $\det(\det_2(B)) = \det(B)$
if and only if $B_{ij}=\delta_{ij}B_{ij}$, where $\delta_{ij}$ denotes the Kronecker delta \cite{thompson1961}.
This article provides initial results on when equality holds in a more general setting.

Another important property to consider, is whether the identity $\det(AB)=\det(A)\det(B)$
carries over to the partial determinant. In other words, what are the conditions on $A$
and $B$ such that $\det_1(AB)=\det_1(A)\det_1(B)$ ?

Since $B\otimes C$ is permutation similar to $C\otimes B$ via the vec-permutation matrix $P$
(perfect shuffle matrix) \cite{henderson81a,vanloan00a}, so that $\det_2(A)=\det_1(PAP^T)$,
we confine our attention to $\det_1(A)$.

In the following, we only consider partial determinants of Kronecker products.
Let $\circ$ denote the Hadamard product and $A^{(m)}$ denote the
$m$-th Hadamard power of the matrix $A$ (i.e. the entry-wise power).
For every $(0,1)$-matrix $A$ we have $A^{(m)}=A$, and most results
are straightforward for $(0,1)$-matrices and over the field $\mathbb{F}=GF(2)$.
Our main results are characterizations in terms of the distributivity of
the Hadamard power over matrix products. In the case of
$\det_1(AB)=\det_1(A)\det_1(B)$
for Kronecker products, we have the following theorem.

\begin{ntheorem}
 Let $A,B\in M_m(\mathbb{F})$ and $C,D\in M_n(\mathbb{F})$. Then
 \begin{equation*}
  \det_1((AB)\otimes (CD))=\det_1(A\otimes C)\det_1(B\otimes D)
 \end{equation*}
 if and only if $\det(AB)=0$ or $(CD)^{(m)}=C^{(m)}D^{(m)}.$
\end{ntheorem}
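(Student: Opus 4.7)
The plan is to reduce both sides of the identity to closed-form expressions via a single preliminary lemma: for $X\in M_m(\mathbb{F})$ and $Y\in M_n(\mathbb{F})$,
\begin{equation*}
 \det_1(X\otimes Y) = \det(X)\cdot Y^{(m)}.
\end{equation*}
Once this is in hand, the theorem reduces to a short algebraic comparison.

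To establish the lemma, I would expand $Y = \sum_{i,j=1}^n Y_{ij}E_{ij}^{[n]}$ and pull $X$ through the Kronecker product to obtain
\begin{equation*}
 X\otimes Y = \sum_{i,j=1}^n (Y_{ij}X)\otimes E_{ij}^{[n]},
\end{equation*}
so that the partial blocks in the definition of $\det_1$ are $(X\otimes Y)_{ij} = Y_{ij}X$. Applying $\det$ and using the homogeneity $\det(\alpha X) = \alpha^m\det(X)$ gives the claimed formula.

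Applying the lemma to each of the three Kronecker products in the statement, the left hand side becomes $\det(AB)\cdot(CD)^{(m)}$, while the right hand side becomes
\begin{equation*}
 \bigl(\det(A)\cdot C^{(m)}\bigr)\bigl(\det(B)\cdot D^{(m)}\bigr) = \det(AB)\cdot C^{(m)}D^{(m)}.
\end{equation*}
Equality therefore holds iff
\begin{equation*}
 \det(AB)\bigl[(CD)^{(m)} - C^{(m)}D^{(m)}\bigr] = 0,
\end{equation*}
and since $\mathbb{F}$ is an integral domain this scalar-times-matrix product vanishes iff the scalar $\det(AB)$ is zero or the matrix $(CD)^{(m)} - C^{(m)}D^{(m)}$ is zero, which is the stated disjunction.

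No single step is difficult, but the one place to exercise care is the convention in the definition of $\det_1$: the ``partial block'' $(X\otimes Y)_{ij}$ is the scalar multiple $Y_{ij}X$ of the outer factor $X$, not an $n\times n$ submatrix of $X\otimes Y$. Tracking this correctly is what forces the appearance of the $m$-th Hadamard power $Y^{(m)}$, and correspondingly of $C^{(m)}$, $D^{(m)}$, and $(CD)^{(m)}$ in the final comparison; that is also why only the Hadamard power of the \emph{second} tensor factor, together with the plain determinant of the first, appears in the reduction.
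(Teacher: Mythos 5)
Your proposal is correct and follows essentially the same route as the paper: the paper's Lemma~\ref{lem:hp} is exactly your preliminary reduction $\det_1(X\otimes Y)=\det(X)Y^{(m)}$, proved by the same expansion $X\otimes Y=\sum_{i,j}(Y_{ij}X)\otimes E_{ij}^{[n]}$, and the paper then compares $\det(AB)(CD)^{(m)}$ with $\det(AB)C^{(m)}D^{(m)}$ just as you do. Your closing remark about the field being an integral domain makes explicit the one small point the paper leaves implicit in ``the result follows.''
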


The partial determinant may be completed when the $m$-th Hadamard power
and determinant commute, 
$\det(B^{(m)})=(\det(B))^{(m)}=\det(B)^m$.

\begin{ntheorem}
 Let $A\in M_{m}(\mathbb{F})$ and $B\in M_n(\mathbb{F})$. Then
 \begin{equation*}
  \det(\det_1(A\otimes B)) = \det(A\otimes B)
 \end{equation*}
 if and only if $\det(A)=0$ or $\det(B^{(m)}) = \det(B)^m.$
\end{ntheorem}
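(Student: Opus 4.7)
The plan is to reduce everything to a single explicit formula for $\det_1(A\otimes B)$ in terms of $\det(A)$ and the Hadamard power $B^{(m)}$, after which both sides of the identity $\det(\det_1(A\otimes B))=\det(A\otimes B)$ become polynomial expressions that are straightforward to compare.

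First I would expand $B$ in the standard basis of $M_n(\mathbb{F})$, writing $B=\sum_{i,j=1}^n(B)_{ij}E_{ij}^{[n]}$, so that
\begin{equation*}
 A\otimes B \;=\; \sum_{i,j=1}^n (B)_{ij}\,A\otimes E_{ij}^{[n]}.
\end{equation*}
This puts $A\otimes B$ in precisely the form required by the definition of $\det_1$, with blocks $A_{ij}=(B)_{ij}A\in M_m(\mathbb{F})$. Using the scalar-homogeneity $\det(\lambda A)=\lambda^m\det(A)$ for $m\times m$ matrices, the definition of $\det_1$ yields
\begin{equation*}
 \det_1(A\otimes B) \;=\; \sum_{i,j=1}^n (B)_{ij}^m \det(A)\, E_{ij}^{[n]} \;=\; \det(A)\,B^{(m)}.
\end{equation*}

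Second, I would take ordinary determinants on both sides of the identity to be proved. The formula just obtained gives
\begin{equation*}
 \det(\det_1(A\otimes B)) \;=\; \det\!\bigl(\det(A)\,B^{(m)}\bigr) \;=\; \det(A)^n\,\det(B^{(m)}),
\end{equation*}
while the classical determinant-of-Kronecker-product identity gives
\begin{equation*}
 \det(A\otimes B) \;=\; \det(A)^n\,\det(B)^m.
\end{equation*}
Hence the proposed identity is equivalent to
\begin{equation*}
 \det(A)^n\bigl(\det(B^{(m)}) - \det(B)^m\bigr) \;=\; 0.
\end{equation*}

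Third, since $\mathbb{F}$ is a field, this product vanishes precisely when one of the two factors vanishes, i.e.\ when $\det(A)=0$ (as $n\ge 1$ and $\mathbb{F}$ has no zero divisors) or $\det(B^{(m)})=\det(B)^m$, which is exactly the claimed characterization. I do not expect any real obstacle here: the entire argument is a direct unwinding of the definitions, the only substantive computational ingredient being the scalar-homogeneity of the determinant that converts each block determinant $\det((B)_{ij}A)$ into $(B)_{ij}^m\det(A)$ and thereby produces the Hadamard power $B^{(m)}$ automatically.
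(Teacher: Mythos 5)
Your proof is correct and follows essentially the same route as the paper: you re-derive Lemma~\ref{lem:hp}, namely $\det_1(A\otimes B)=\det(A)B^{(m)}$, and then compare $\det(A)^n\det(B^{(m)})$ with $\det(A\otimes B)=\det(A)^n\det(B)^m$, using that a field has no zero divisors. The paper presents the theorem as an immediate consequence of that same lemma, so there is nothing further to add.
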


These two conditions on the matrices in $M_n(\mathbb{F})$ which satisfy the above two
theorems allow us to characterize the matrices in terms of underlying monoids in
$M_n(\mathbb{F})$. As we will see below, the $m$-th Hadamard power distributes over matrix
products and distributes over column/row-wise Grassman products (wedge products)
in these monoids. Finally, we consider a monoid $D_m(\mathbb{F})$ and a determinant-root
operation $\Det$, such that the partial determinant-root obeys

\begin{ntheorem}
 \begin{enumerate}
  \item
   Let $A\in D_m(\mathbb{F})$ and $C\in D_n(\mathbb{F})$. Then
   \begin{equation*}
    \Det(A\otimes C) = \Det(\Det_1(A\otimes C)).
   \end{equation*}
  \item
   Let $A,B\in D_m(\mathbb{F})$ and $C,D\in M_n(\mathbb{F})$. Then
   \begin{equation*}
    \Det_1((A\otimes C)(B\otimes D)) = \Det_1(A\otimes C)\Det_1(B\otimes D).
   \end{equation*}
 \end{enumerate}
\end{ntheorem}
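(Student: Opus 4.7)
The plan is to reduce both parts of the theorem to a clean closed-form expression for $\Det_1$ on a Kronecker product, after which each part collapses to a one-line application of the basic properties of $\Det$ on the monoid $D_m(\mathbb{F})$.

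The first step is to unpack $\Det_1(A\otimes C)$. Writing $C = \sum_{i,j=1}^n (C)_{ij} E_{ij}^{[n]}$, we obtain the block decomposition
\begin{equation*}
 A\otimes C \;=\; \sum_{i,j=1}^n \bigl((C)_{ij} A\bigr)\otimes E_{ij}^{[n]},
\end{equation*}
so the definition of $\Det_1$, combined with the scalar-homogeneity $\Det(\alpha A)=\alpha\Det(A)$ that any reasonable determinant-root on $M_m(\mathbb{F})$ must satisfy, gives
\begin{equation*}
 \Det_1(A\otimes C) \;=\; \sum_{i,j=1}^n (C)_{ij}\Det(A)\,E_{ij}^{[n]} \;=\; \Det(A)\cdot C.
\end{equation*}
This is the direct analogue of $\det_1(A\otimes C) = \det(A)\,C^{(m)}$, with the $m$-th Hadamard power absorbed into the determinant-root.

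For part (1), applying $\Det$ once more and invoking homogeneity on $M_n(\mathbb{F})$ gives $\Det(\Det_1(A\otimes C)) = \Det(\Det(A)\,C) = \Det(A)\Det(C)$. On the other side, $\det(A\otimes C) = \det(A)^n\det(C)^m$ and $A\otimes C$ has size $mn$, so the $mn$-th determinant-root likewise evaluates to $\Det(A\otimes C) = \Det(A)\Det(C)$, matching. For part (2), use the mixed product property $(A\otimes C)(B\otimes D) = (AB)\otimes(CD)$ and apply the closed-form expression to each side:
\begin{equation*}
 \Det_1\bigl((A\otimes C)(B\otimes D)\bigr) = \Det(AB)\cdot CD, \qquad \Det_1(A\otimes C)\Det_1(B\otimes D) = \Det(A)\Det(B)\cdot CD.
\end{equation*}
Equality then reduces to $\Det(AB) = \Det(A)\Det(B)$, which is the defining multiplicativity of $\Det$ on $D_m(\mathbb{F})$. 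Since $C$ and $D$ only appear as the product $CD$ on the right, no condition on them beyond membership in $M_n(\mathbb{F})$ is required, which matches the hypotheses of the theorem.

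The main obstacle will be rigorously justifying the two key properties $\Det(\alpha A) = \alpha\Det(A)$ for $\alpha \in \mathbb{F}$ and $\Det(A\otimes C) = \Det(A)\Det(C)$ directly from the construction of $\Det$ and $D_m(\mathbb{F})$. These in turn should trace back to the earlier characterizations in terms of the Hadamard power commuting with matrix products and with determinants, since the first characterization underlies multiplicativity of $\Det$ on the monoid and the second ensures compatibility of the root-choice with scalar multiplication. Once those two properties are secured, both parts reduce to the one-line manipulations displayed above.
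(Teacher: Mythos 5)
Your proposal follows the same route as the paper's own proof: the identity $\Det_1(A\otimes C)=\Det(A)\,C$ is precisely the paper's Lemma \ref{lem:detroot}, part (2) is then the same computation $\Det(AB)\,(CD)=(\Det(A)\Det(B))\,(CD)$ using the mixed-product rule and multiplicativity of $\Det$ on $D_m(\mathbb{F})$, and part (1) is the comparison of $\Det(\Det(A)\,C)$ with $\Det(A\otimes C)$. The displayed computations are correct.

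The one substantive issue is where you propose to obtain the two deferred facts. They do not ``trace back to the earlier characterizations in terms of the Hadamard power'': the monoid $D_m(\mathbb{F})$ in this part of the paper is the set of matrices for which $\sqrt[m]{\det(M)}$ exists in the quotient $G/R_m$ of the multiplicative group $G$ of $\mathbb{F}$ by $R_m=\{\,g:g^m=1\,\}$, and it is unrelated to the Hadamard-power monoids of the preceding section (the notation collides, but the definitions differ). Scalar homogeneity is immediate from $\det(\alpha A)=\alpha^m\det(A)$ together with $\sqrt[m]{\alpha^m}=\alpha\cdot R_m$, which is exactly how the paper proves Lemma \ref{lem:detroot}. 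The genuine content of part (1) is that the two sides a priori live in different groups: $\Det(\Det_1(A\otimes C))$ is an $n$-th root of an element of $G/R_m$ and hence lies in $(G/R_m)/R_n(G/R_m)$, whereas $\Det(A\otimes C)=\sqrt[mn]{\det(A)^n\det(C)^m}$ lies in $G/R_{mn}$, and their ``product'' $\Det(A)\Det(C)$ must be interpreted via the embedding $a\cdot R_m\mapsto a^n\cdot R_{mn}$ (the paper's operation $\ast$). Writing both sides as $\Det(A)\Det(C)$ and declaring them to match silently presupposes the identification $G/R_{mn}\cong (G/R_m)/R_n(G/R_m)$, which is exactly the isomorphism \eqref{eq:cong} that the paper invokes at this step. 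With that identification made explicit your argument closes; without it, the claimed equality in part (1) is not an equality of elements of a single group.
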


\section{Kronecker products}

Let $I_n$ denote the $n\times n$ identity matrix, and $0_n$ the $n\times n$
zero matrix.
First we note that $\det_1(I_{mn})=\det_1(I_m\otimes I_n)=I_n$ analogous
to $\det(I)=1$.

\begin{lemma}
 \label{lem:hp}%
 Let $A\in M_{m}(\mathbb{F})$ and $B\in M_n(\mathbb{F})$. Then
 \begin{equation*}
  \det_1(A\otimes B) = \det(A)B^{(m)}.
 \end{equation*}
\end{lemma}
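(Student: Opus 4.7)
The plan is to prove the identity by direct expansion, using only the definition of $\det_1$ and the bilinearity of the Kronecker product.

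First I would expand $B$ in the standard basis of $M_n(\mathbb{F})$, writing
\[
 B = \sum_{i,j=1}^n (B)_{ij}\,E_{ij}^{[n]}.
\]
By bilinearity of $\otimes$, together with the fact that scalars pull across the Kronecker product, this gives the decomposition
\[
 A\otimes B = \sum_{i,j=1}^n \bigl((B)_{ij} A\bigr)\otimes E_{ij}^{[n]},
\]
which is precisely the canonical form $\sum A_{ij}\otimes E_{ij}^{[n]}$ that $\det_1$ requires, with $A_{ij}=(B)_{ij}A\in M_m(\mathbb{F})$.

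Next I would apply the definition of the partial determinant together with the scalar-homogeneity $\det(\lambda X)=\lambda^m\det(X)$ for $X\in M_m(\mathbb{F})$. This yields
\[
 \det_1(A\otimes B) = \sum_{i,j=1}^n \det\bigl((B)_{ij} A\bigr) E_{ij}^{[n]}
 = \det(A)\sum_{i,j=1}^n (B)_{ij}^m\, E_{ij}^{[n]}.
\]
Finally I would recognise the remaining sum as the entry-wise $m$-th power $B^{(m)}$, completing the identity $\det_1(A\otimes B)=\det(A)B^{(m)}$.

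There is no real obstacle here; the argument is a one-shot unpacking of the definition. The only point requiring any care is to verify that the indices in the decomposition of $B$ line up correctly with those appearing in the definition of $\det_1$ (i.e.\ that it is the $E_{ij}^{[n]}$ factor on the right that indexes the sum, so the blocks $A_{ij}=(B)_{ij}A$ are indeed $m\times m$ matrices, as needed for $\det$ to produce an $m$-th power of the scalar $(B)_{ij}$).
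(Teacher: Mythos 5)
Your proof is correct and follows exactly the paper's approach: the paper likewise rewrites $A\otimes B = \sum_{i,j}((B)_{ij}A)\otimes E_{ij}^{[n]}$ and declares the result immediate, while you simply spell out the remaining steps (the definition of $\det_1$ and the homogeneity $\det((B)_{ij}A)=(B)_{ij}^m\det(A)$). No issues.
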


\begin{proof}
 The proof follows immediately from
 \begin{equation*}
  A\otimes B = \sum_{i,j=1}^n ((B)_{ij}A)\otimes E_{ij}^{[n]},
 \end{equation*}
 where $B=(B)_{ij}$.
\end{proof}

This is a direct consequence of the remark by Choi that $\det_1(A)$ may be
computed as a determinant where the blocks are treated as scalars and instead
of the usual scalar product we use the Hadamard product \cite{choi2017}.

\begin{theorem}
 \label{thm:sum}%
 Let $A,C\in M_{m}(\mathbb{F})$ and $B,D\in M_n(\mathbb{F})$ with $B\circ D=0_n$. Then
 \begin{equation*}
  \det_1(A\otimes B+C\otimes D) = \det_1(A\otimes B)+\det_1(C\otimes D).
 \end{equation*}
\end{theorem}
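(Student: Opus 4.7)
The plan is to expand the sum $A\otimes B + C\otimes D$ in the same block form used to prove Lemma \ref{lem:hp}, apply the definition of $\det_1$ directly, and use the hypothesis $B\circ D = 0_n$ to decouple the two contributions block by block.

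First I would write
\begin{equation*}
 A\otimes B + C\otimes D = \sum_{i,j=1}^n \bigl((B)_{ij} A + (D)_{ij} C\bigr)\otimes E_{ij}^{[n]},
\end{equation*}
so that by the definition of the partial determinant,
\begin{equation*}
 \det_1(A\otimes B + C\otimes D) = \sum_{i,j=1}^n \det\bigl((B)_{ij} A + (D)_{ij} C\bigr)\, E_{ij}^{[n]}.
\end{equation*}

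The crucial step then uses the hypothesis: the condition $B\circ D = 0_n$ means $(B)_{ij}(D)_{ij}=0$ for every pair $(i,j)$, so at least one of the two scalars $(B)_{ij}$, $(D)_{ij}$ vanishes. Consequently the matrix $(B)_{ij} A + (D)_{ij} C$ is always a single scalar multiple of one of $A$ or $C$, and
\begin{equation*}
 \det\bigl((B)_{ij} A + (D)_{ij} C\bigr) = (B)_{ij}^m \det(A) + (D)_{ij}^m \det(C) = (B^{(m)})_{ij}\det(A) + (D^{(m)})_{ij}\det(C),
\end{equation*}
the point being that the ``cross'' contribution is absent, not merely small. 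Summing over $i,j$ gives $\det(A)B^{(m)} + \det(C)D^{(m)}$, which by Lemma \ref{lem:hp} is exactly $\det_1(A\otimes B) + \det_1(C\otimes D)$.

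The only potential obstacle is verifying the decoupling of the determinant, and this is trivial once one notes that $B\circ D=0_n$ is precisely the entrywise disjoint-support condition ensuring that for each $(i,j)$ one of the summands in $(B)_{ij} A + (D)_{ij} C$ vanishes; without this hypothesis the determinant of a sum of two matrices of equal rank has no reason to split, which is why Theorem \ref{thm:sum} is not linearity of $\det_1$.
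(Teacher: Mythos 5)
Your proof is correct and follows essentially the same route as the paper: both rest on the observation that $B\circ D=0_n$ forces at least one of $(B)_{ij}$, $(D)_{ij}$ to vanish for each $(i,j)$, so each block $(B)_{ij}A+(D)_{ij}C$ is a scalar multiple of a single matrix and its determinant splits. The paper phrases this by partitioning the index set into the disjoint supports of $B$ and $D$, while you keep one sum and kill the cross term entrywise --- a purely cosmetic difference.
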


\begin{proof}
 We have
 \begin{equation}
  \label{eq:disjsum}
  A\otimes B + C\otimes D
    = \sum_{\textstyle\genfrac{}{}{0pt}{}{i,j=1}{(B)_{ij}\neq0}}^n
        ((B)_{ij}A)\otimes E_{ij}^{[n]}
    + \sum_{\textstyle\genfrac{}{}{0pt}{}{i,j=1}{(D)_{ij}\neq0}}^n
        ((D)_{ij}C)\otimes E_{ij}^{[n]},
 \end{equation}
 and since $B\circ D=0_n$ if follows that $(B)_{ij}\neq0\Longrightarrow (D)_{ij}=0$
 and vice-versa. The two sums in \eqref{eq:disjsum} are over disjoint subsets of
 $\{1,\ldots,n\}^2$ and
 \begin{align*}
  \det_1(A\otimes B+C\otimes D)
    &= \sum_{\textstyle\genfrac{}{}{0pt}{}{i,j=1}{(B)_{ij}\neq0}}^n
        \det(A)(B)_{ij}^m E_{ij}^{[n]}
     + \sum_{\textstyle\genfrac{}{}{0pt}{}{i,j=1}{(D)_{ij}\neq0}}^n
        \det(C)(D)_{ij}^m E_{ij}^{[n]} \\
    &= \det_1(A\otimes B)+\det_1(C\otimes D).
 \end{align*}
\end{proof}

Lemma \ref{lem:hp} provides an immediate characterization of partial
determinants that can be completed.
\begin{theorem}
 \label{thm:parthp}%
 Let $A\in M_{m}(\mathbb{F})$ and $B\in M_n(\mathbb{F})$. Then
 \begin{equation*}
  \det(\det_1(A\otimes B)) = \det(A\otimes B)
 \end{equation*}
 if and only if $\det(A)=0$ or $\det(B^{(m)}) = \det(B)^m.$
\end{theorem}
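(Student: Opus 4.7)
The plan is to reduce everything to Lemma \ref{lem:hp} and the classical Kronecker product determinant identity $\det(A\otimes B)=\det(A)^n\det(B)^m$ (valid for $A\in M_m(\mathbb{F})$, $B\in M_n(\mathbb{F})$ over any field). These two identities together should make the equivalence essentially a one-line algebraic observation.

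First I would apply Lemma \ref{lem:hp} to rewrite the inner partial determinant: $\det_1(A\otimes B)=\det(A)\,B^{(m)}$. Since $\det(A)$ is a scalar and $B^{(m)}\in M_n(\mathbb{F})$, I would then use the elementary identity $\det(cM)=c^n\det(M)$ for $M\in M_n(\mathbb{F})$ to obtain
\begin{equation*}
 \det(\det_1(A\otimes B)) = \det(A)^n\,\det(B^{(m)}).
\end{equation*}
On the other hand, by the Kronecker product determinant formula,
\begin{equation*}
 \det(A\otimes B) = \det(A)^n\det(B)^m.
\end{equation*}

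The desired equality $\det(\det_1(A\otimes B))=\det(A\otimes B)$ thus reduces to
$\det(A)^n\det(B^{(m)})=\det(A)^n\det(B)^m$, i.e.\ to $\det(A)^n\bigl(\det(B^{(m)})-\det(B)^m\bigr)=0$. Because $\mathbb{F}$ is a field (hence an integral domain), this vanishes precisely when $\det(A)=0$ or $\det(B^{(m)})=\det(B)^m$, which is exactly the stated dichotomy.

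There is really no main obstacle here; the only thing to be slightly careful about is the scalar-extraction step $\det(\det(A)\,B^{(m)})=\det(A)^n\det(B^{(m)})$, which uses that $B^{(m)}$ has size $n\times n$ (not $m\times m$), and the reduction from $x^n y = x^n z$ to ``$x=0$ or $y=z$'', which is immediate over a field. Both implications (if and only if) are handled simultaneously by this chain of equalities, so no separate direction needs to be argued.
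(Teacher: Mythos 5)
Your proof is correct and follows exactly the route the paper intends: the paper offers no written proof, stating only that the theorem is an ``immediate'' consequence of Lemma \ref{lem:hp}, and your chain $\det(\det_1(A\otimes B))=\det(A)^n\det(B^{(m)})$ versus $\det(A\otimes B)=\det(A)^n\det(B)^m$, followed by cancellation in an integral domain, is precisely that immediate argument spelled out.
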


Drnov\v sek considered a much stronger condition in \cite{drnovsek2015}
for matrices $B$ (which also satisfy Theorem \ref{thm:parthp}), namely
that $B^{(r)}=B^{r}$ for all $r\in\mathbb{N}$. Under the determinant,
we have a weaker condition which leads to somewhat trivial cases for
triangular matrices and $(0,1)$-matrices. For triangular matrices $B$
we have $\det(B^{(m)})=\det(B^m)=\det(B)^m.$

\begin{corollary}
 \label{cor:parttri}%
 Let $A\in M_{m}(\mathbb{F})$ be arbitrary and let $B\in M_n(\mathbb{F})$ be triangular. Then
 \begin{equation*}
  \det(\det_1(A\otimes B)) = \det(A\otimes B).
 \end{equation*}
\end{corollary}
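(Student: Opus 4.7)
The plan is to invoke Theorem \ref{thm:parthp} directly: it suffices to verify that, for triangular $B\in M_n(\mathbb{F})$, the identity $\det(B^{(m)}) = \det(B)^m$ holds (then the case $\det(A)=0$ is irrelevant and we do not need any hypothesis on $A$).

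First I would observe that the Hadamard power preserves the triangular structure. If $B$ is upper triangular (the lower case is symmetric), then $(B)_{ij}=0$ for all $i>j$, so $(B^{(m)})_{ij}=(B)_{ij}^m=0$ for all $i>j$ as well. Thus $B^{(m)}$ is upper triangular with the same zero pattern below the diagonal.

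Next, since the determinant of a triangular matrix is the product of its diagonal entries, I would compute
\begin{equation*}
 \det(B^{(m)}) \;=\; \prod_{i=1}^{n}(B^{(m)})_{ii} \;=\; \prod_{i=1}^{n}(B)_{ii}^{m} \;=\; \Bigl(\prod_{i=1}^{n}(B)_{ii}\Bigr)^{m} \;=\; \det(B)^{m},
\end{equation*}
which is precisely the second alternative in Theorem \ref{thm:parthp}. Applying that theorem then yields $\det(\det_1(A\otimes B))=\det(A\otimes B)$, as required.

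There is no real obstacle here; the corollary is essentially an immediate specialization of Theorem \ref{thm:parthp}, with the only substantive content being the elementary fact that taking entry-wise $m$-th powers commutes with taking determinants on triangular matrices.
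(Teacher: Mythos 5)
Your proof is correct and takes essentially the same route as the paper, which justifies the corollary by the one-line remark that for triangular $B$ one has $\det(B^{(m)})=\det(B)^m$ and then invokes Theorem \ref{thm:parthp}. Your additional detail (the Hadamard power preserves the triangular zero pattern, and the determinant is the product of the diagonal entries) is exactly the implicit content of that remark.
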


For $(0,1)$-matrices $B$ we have $B^{(m)}=B$, and hence the following corollary
to Theorem \ref{thm:parthp}.

\begin{corollary}
 \label{cor:part01}%
 Let $A\in M_{m}(\mathbb{F})$ be arbitrary and let $B\in M_n(\mathbb{F})$ be a $(0,1)$-matrix. Then
 \begin{equation*}
  \det(\det_1(A\otimes B)) = \det(A\otimes B)
 \end{equation*}
 if and only if $\det(A)\det(B)=0$ or $\det(B)$ is an $(m-1)$-th root of unity.
\end{corollary}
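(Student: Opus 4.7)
The plan is to simply apply Theorem \ref{thm:parthp} and specialize the Hadamard-power condition using the hypothesis that $B$ is a $(0,1)$-matrix.

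First, I would invoke Theorem \ref{thm:parthp} to reduce the claim to showing that, under the assumption that $B$ is a $(0,1)$-matrix,
\begin{equation*}
 \bigl(\det(A)=0 \text{ or } \det(B^{(m)})=\det(B)^m\bigr) \iff \bigl(\det(A)\det(B)=0 \text{ or } \det(B)^{m-1}=1\bigr).
\end{equation*}
For any $(0,1)$-matrix $B$, every entry satisfies $b_{ij}^m = b_{ij}$, so $B^{(m)}=B$ and hence $\det(B^{(m)})=\det(B)$. The right-hand equality in the first clause therefore reduces to the scalar identity $\det(B)=\det(B)^m$.

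Next I would factor this scalar identity: $\det(B)=\det(B)^m$ is equivalent to $\det(B)\bigl(\det(B)^{m-1}-1\bigr)=0$, i.e.\ to $\det(B)=0$ or $\det(B)^{m-1}=1$. Substituting back gives the condition $\det(A)=0$ or $\det(B)=0$ or $\det(B)$ is an $(m-1)$-th root of unity, which collapses to $\det(A)\det(B)=0$ or $\det(B)^{m-1}=1$, matching the statement of the corollary.

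There is no genuine obstacle here: the only subtlety is noting that the case $\det(B)=0$, which arises naturally from the factorization $\det(B)(\det(B)^{m-1}-1)=0$, is absorbed into the clause $\det(A)\det(B)=0$ rather than into the root-of-unity clause (since $0$ is not a root of unity in $\mathbb{F}$). Once Theorem \ref{thm:parthp} and the identity $B^{(m)}=B$ are in hand, the rest is a one-line algebraic manipulation.
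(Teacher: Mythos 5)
Your proposal is correct and follows exactly the paper's intended route: the paper derives this corollary from Theorem \ref{thm:parthp} via the observation that $B^{(m)}=B$ for a $(0,1)$-matrix, so the condition $\det(B^{(m)})=\det(B)^m$ becomes $\det(B)=\det(B)^m$, which factors as $\det(B)=0$ or $\det(B)^{m-1}=1$. Your handling of how the $\det(B)=0$ case is absorbed into the clause $\det(A)\det(B)=0$ is the same bookkeeping the paper leaves implicit.
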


When $n=2$, the characterization is straightforward since
\begin{equation*}
 \det\begin{pmatrix}a^2&b^2\\ c^2&d^2\end{pmatrix} = \det\begin{pmatrix}a&b\\ c&d\end{pmatrix}^2
\end{equation*}
if and only if $b^2c^2=abcd$, so that $bc=0$ (triangular matrix)
or $ad=bc\neq 0$ (rank-1 matrix) and both cases satisfy $\det(B^{(m)}) = \det(B)^m.$
Here we used the fact that a rank-1 matrix $\mathbf{x}\mathbf{y}^T$ remains a
rank-1 matrix $\mathbf{x}^{(m)}(\mathbf{y}^{(m)})^T$
under the Hadamard power, where $\mathbf{x}$ and $\mathbf{y}$ are column vectors.
Now we consider transformations which preserve the partial determinant.
The following theorem characterizes the multiplicative property of the partial determinant.

\begin{theorem}
 \label{thm:mul}%
 Let $A,B\in M_m(\mathbb{F})$ and $C,D\in M_n(\mathbb{F})$. Then
 \begin{equation*}
  \det_1((A\otimes C)(B\otimes D))=\det_1(A\otimes C)\det_1(B\otimes D)
 \end{equation*}
 if and only if $\det(AB)=0$ or $(CD)^{(m)}=C^{(m)}D^{(m)}.$
\end{theorem}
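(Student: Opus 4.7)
The plan is to reduce both sides of the claimed identity to explicit expressions via Lemma \ref{lem:hp} and then observe that equality collapses to a single scalar-times-matrix equation. The key ingredient is the mixed-product property of the Kronecker product,
\begin{equation*}
 (A\otimes C)(B\otimes D) = (AB)\otimes (CD),
\end{equation*}
which turns the left-hand side $\det_1((A\otimes C)(B\otimes D))$ into $\det_1((AB)\otimes (CD))$. Applying Lemma \ref{lem:hp} then gives
\begin{equation*}
 \det_1((AB)\otimes (CD)) = \det(AB)\,(CD)^{(m)}.
\end{equation*}

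For the right-hand side, I would apply Lemma \ref{lem:hp} to each factor separately, yielding
\begin{equation*}
 \det_1(A\otimes C)\det_1(B\otimes D) = \bigl(\det(A)\,C^{(m)}\bigr)\bigl(\det(B)\,D^{(m)}\bigr) = \det(AB)\,C^{(m)}D^{(m)},
\end{equation*}
using multiplicativity of the ordinary determinant to combine $\det(A)\det(B)=\det(AB)$ as a scalar.

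Subtracting these expressions, the theorem's identity is equivalent to
\begin{equation*}
 \det(AB)\bigl((CD)^{(m)} - C^{(m)}D^{(m)}\bigr) = 0_n.
\end{equation*}
Since this is a scalar multiplying a matrix, over any field it vanishes if and only if $\det(AB)=0$ or $(CD)^{(m)} = C^{(m)}D^{(m)}$, which is exactly the stated dichotomy. Both directions (``if'' and ``only if'') are captured by this single equivalence, so no separate argument is needed.

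The proof is essentially a three-line computation, so there is no substantive obstacle; the only thing to watch is that the final step genuinely gives an ``iff'' rather than just a sufficient condition, which it does because the equation $\lambda M = 0$ for $\lambda\in\mathbb{F}$ and $M\in M_n(\mathbb{F})$ forces $\lambda = 0$ or $M = 0_n$. The content of the theorem therefore lies almost entirely in the formula of Lemma \ref{lem:hp} combined with the mixed-product property, and once those are invoked the characterization follows immediately.
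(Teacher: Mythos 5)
Your proof is correct and follows essentially the same route as the paper: both reduce the identity via the mixed-product property and Lemma \ref{lem:hp} to the single equation $\det(AB)(CD)^{(m)}=\det(AB)C^{(m)}D^{(m)}$ and conclude from the absence of zero divisors in $\mathbb{F}$. You simply spell out the final equivalence in slightly more detail than the paper's ``and the result follows.''
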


\begin{proof}
 Applying Lemma \ref{lem:hp} on both sides of the equation provides
 \begin{equation*}
  \det(AB)(CD)^{(m)}=\det(AB)C^{(m)}D^{(m)}
 \end{equation*}
 and the result follows.
\end{proof}

When $\mathbb{F}=GF(2)$ we have the following corollary.
\begin{corollary}
 \label{cor:mulgf2}%
 Let $A,B\in M_m(GF(2))$ and $C,D\in M_n(GF(2))$. Then
 \begin{equation*}
  \det_1((A\otimes C)\otimes (B\otimes D))=\det_1(A\otimes C)\det_1(B\otimes D).
 \end{equation*}
\end{corollary}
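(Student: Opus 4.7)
The plan is extremely short: the corollary should fall out of Theorem \ref{thm:mul} once one observes that the Hadamard power is trivial over $GF(2)$.

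First I would note that for $\mathbb{F}=GF(2)$ every entry of a matrix lies in $\{0,1\}$, and since $0^m=0$ and $1^m=1$ for all $m\geq 1$, we have $M^{(m)}=M$ for every $M\in M_k(GF(2))$ and every positive integer $m$. In particular, applying this both to $CD$ and to $C,D$ separately gives
\begin{equation*}
 (CD)^{(m)} = CD = C\cdot D = C^{(m)} D^{(m)},
\end{equation*}
so the second alternative in Theorem \ref{thm:mul} is automatic over $GF(2)$.

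Second, I would invoke Theorem \ref{thm:mul} directly: since the hypothesis $(CD)^{(m)}=C^{(m)}D^{(m)}$ is satisfied unconditionally, the conclusion
\begin{equation*}
 \det_1((A\otimes C)(B\otimes D)) = \det_1(A\otimes C)\det_1(B\otimes D)
\end{equation*}
holds for all $A,B\in M_m(GF(2))$ and $C,D\in M_n(GF(2))$, with no need to consider the $\det(AB)=0$ branch.

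There is no real obstacle here; the only subtlety is making sure one reads the ``$\otimes$'' in the corollary statement as the ordinary matrix product (as in Theorem \ref{thm:mul}), since otherwise the right-hand side of the displayed equation would not match dimensions. Apart from that, the proof is a one-line specialization of Theorem \ref{thm:mul}.
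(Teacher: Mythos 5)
Your proof is correct and is exactly the argument the paper intends: over $GF(2)$ every entry satisfies $x^m=x$, so $M^{(m)}=M$ and the condition $(CD)^{(m)}=C^{(m)}D^{(m)}$ in Theorem \ref{thm:mul} holds automatically (the paper leaves this unproved, having already remarked in the introduction that Hadamard powers are trivial over $GF(2)$). Your observation that the middle ``$\otimes$'' in the corollary's displayed equation must be read as ordinary matrix multiplication is also the right reading of what appears to be a typo.
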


Since a row or column permutation of a matrix commutes with the Hadamard power,
as does multiplying each row by a constant, we have the following corollary
to Theorem \ref{thm:mul}.

\begin{corollary}
 \label{cor:mulperm}%
 Let $A,B\in M_m(\mathbb{F})$ and $C,P\in M_n(\mathbb{F})$, where $P$ is a permutation matrix
 or a diagonal matrix. Then
 \begin{gather*}
  \det_1((AB)\otimes (PC))=\det_1(A\otimes P)\det_1(B\otimes C), \\
  \det_1((AB)\otimes (CP))=\det_1(A\otimes C)\det_1(B\otimes P).
 \end{gather*}
\end{corollary}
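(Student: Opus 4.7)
The plan is to reduce the corollary directly to Theorem \ref{thm:mul}. First I would rewrite $(A\otimes C)(B\otimes D)=(AB)\otimes(CD)$, so that the two identities asserted by the corollary fit the format of Theorem \ref{thm:mul} with the $M_n(\mathbb{F})$ factors being $(P,C)$ in the first identity and $(C,P)$ in the second. By that theorem, each equality holds precisely when $\det(AB)=0$ or when the Hadamard $m$-th power of the product of those two factors splits as the product of their Hadamard $m$-th powers.

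Next I would verify the splitting condition in each of the two families of $P$. The key observation, which is exactly the remark preceding the corollary, is that both row/column permutation and row/column scaling commute with the entrywise $m$-th power. Concretely, for a diagonal matrix $P$ with diagonal entries $p_i$ one has $(PC)_{ij}=p_i C_{ij}$ and hence $((PC)^{(m)})_{ij}=p_i^m C_{ij}^m = (P^{(m)}C^{(m)})_{ij}$. For a permutation matrix $P$ the $(0,1)$ property gives $P^{(m)}=P$, and left multiplication by $P$ merely reorders the rows of $C$, and equally the rows of $C^{(m)}$, so $(PC)^{(m)} = PC^{(m)} = P^{(m)}C^{(m)}$. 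The symmetric argument with right multiplication yields $(CP)^{(m)} = C^{(m)}P^{(m)}$ in both cases.

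Applying Theorem \ref{thm:mul} with these splittings then finishes both identities simultaneously. The main (very mild) obstacle is essentially bookkeeping: keeping straight which factor plays the role of $C$ and which of $D$ in Theorem \ref{thm:mul}, since $P$ occupies different sides in the two asserted identities. All of the genuine content has already been absorbed into Lemma \ref{lem:hp} and Theorem \ref{thm:mul}, so the corollary reduces to the routine verification above.
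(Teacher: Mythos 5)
Your proposal is correct and follows essentially the same route as the paper, which justifies the corollary purely by the remark that row/column permutations and row/column scalings commute with the Hadamard power, so that $(PC)^{(m)}=P^{(m)}C^{(m)}$ and $(CP)^{(m)}=C^{(m)}P^{(m)}$, after which Theorem \ref{thm:mul} applies. Your explicit entrywise verification for the diagonal case and the $(0,1)$-matrix observation $P^{(m)}=P$ for the permutation case simply spell out what the paper leaves implicit.
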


Thus if $B\in M_n(\mathbb{F})$ has no more than $n(n+1)/2$ non-zero entries,
and there exist permutations $P,Q\in M_n(\mathbb{F})$ such that $PBQ^T$ is triangular,
then $\det(\det_1(A\otimes B)) = \det(A\otimes B).$ Of course, this is not true in
general. For example
\begin{equation*}
 B = 
 \begin{pmatrix}
  1 & 0 & 1 & 1 \\
  0 & 1 & 0 & 0 \\
  1 & 0 & 1 & 0 \\
  0 & 0 & 0 & 1
 \end{pmatrix}
\end{equation*}
has $7\leq 10$ non-zero entries, but no such $P$ and $Q$ exist. We note that
multiplying by $P$ preserves the number of non-zero entries in each
column and multiplying by $Q^T$ preserves the number of non-zero entries
in each row.

\section{Monoids for completable partial determinants}

\subsection{Characterizations in terms of monoids}

Theorems \ref{thm:parthp} and \ref{thm:mul} provide a closure result for
matrix multiplication, if $C$ and $D$ are matrices satisfying both theorems
then $CD$ satisfies Theorem \ref{thm:parthp}. 
A set of such matrices, with the identity, form a monoid under matrix
multiplication. This follows since $(CD)^{(m)}=C^{(m)}D^{(m)}$
implies
\begin{equation*}
 \det\left((CD)^{(m)}\right)
  = \det\left(C^{(m)}\right)\det\left(D^{(m)}\right)
  = \det(C)^m\det(D)^m
  = \det(CD)^m
\end{equation*}
when $\det(C^{(m)})=\det(C)^m$ and $\det(D^{(m)})=\det(D)^m$.
The identity $C=I_n$ is clearly in this set. The diagonal matrices
in $M_n(\mathbb{F})$ provide a non-trivial example of such a monoid.
Another example is the set of all matrices with at most one non-zero
row together with the identity matrix.

In the sequel, we use the
term monoid to mean a monoid under the usual matrix multiplication.
Theorems \ref{thm:parthp} and \ref{thm:mul} trivially provide
the next corollary.

\begin{corollary}
 \label{cor:moncor}%
 A monoid $M$ of matrices $C,D\in M_n(\mathbb{F})$ satisfying
 \begin{displaymath}
  \det(C^{(m)}) = \det(C)^m,\quad
  \det(D^{(m)}) = \det(D)^m\quad \text{and}\quad
  (CD)^{(m)} = C^{(m)}D^{(m)}
 \end{displaymath}
 satisfies
 \begin{gather*}
  \det(\det_1(A\otimes B)) = \det(A\otimes B), \\
  \det_1((A\otimes C)\otimes(B\otimes D)) = \det_1(A\otimes C)\det_1(B\otimes D)
 \end{gather*}
 for all $A,B\in M_m(\mathbb{F})$ and $C,D\in M$.
\end{corollary}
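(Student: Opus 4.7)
The plan is to derive both conclusions by direct appeal to Theorems~\ref{thm:parthp} and~\ref{thm:mul}, reading the three monoid conditions as precisely the non-degenerate branches of the hypotheses appearing in those two theorems. No new computation is really needed; the corollary is a packaging statement.

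For the completability equation $\det(\det_1(A\otimes B))=\det(A\otimes B)$, I would apply Theorem~\ref{thm:parthp} with the second tensor factor drawn from $M$ (interpreting the statement in light of its quantifier over $M$, since otherwise the monoid hypothesis plays no role). The defining condition $\det(C^{(m)})=\det(C)^m$ for every $C\in M$ is exactly one alternative in the disjunctive hypothesis of Theorem~\ref{thm:parthp}, so the conclusion is immediate for every $A\in M_m(\mathbb{F})$ and every $C\in M$, regardless of whether $\det(A)=0$.

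For the multiplicativity equation, I would apply Theorem~\ref{thm:mul} directly with $C,D\in M$. The monoid condition $(CD)^{(m)}=C^{(m)}D^{(m)}$ is exactly the non-degenerate alternative in that theorem's hypothesis, so the identity $\det_1((A\otimes C)(B\otimes D))=\det_1(A\otimes C)\det_1(B\otimes D)$ holds for arbitrary $A,B\in M_m(\mathbb{F})$, again without any further assumption on $A,B$.

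There is no genuine obstacle: the corollary is a direct re-packaging of the two characterization theorems, chosen so that both non-degenerate hypotheses are available simultaneously within $M$. The only substantive observation beyond quoting the two theorems is the closure computation already carried out in the paragraph preceding the corollary, which shows that $\det((CD)^{(m)})=\det(CD)^m$ whenever $C,D\in M$; this ensures that the class of matrices on which Theorem~\ref{thm:parthp} applies is preserved under the monoid multiplication, so that iterating within $M$ remains consistent.
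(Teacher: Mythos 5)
Your proposal matches the paper's treatment exactly: the paper states that Theorems~\ref{thm:parthp} and~\ref{thm:mul} ``trivially provide'' the corollary, relying on the closure computation $\det((CD)^{(m)})=\det(C^{(m)})\det(D^{(m)})=\det(C)^m\det(D)^m=\det(CD)^m$ carried out in the preceding paragraph, which is precisely what you cite. Your reading of the first displayed identity (taking the second Kronecker factor from $M$, so that the hypothesis $\det(C^{(m)})=\det(C)^m$ is the relevant branch of Theorem~\ref{thm:parthp}) is the correct interpretation of the statement, and no further argument is needed.
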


A necessary condition for a matrix $C\in M$
to be in such a monoid, is that $(C^k)^{(m)}=(C^{(m)})^k$ for all $k\in\mathbb{N}$.
In the case of $M_n(GF(2))$, the largest such monoid is $M=M_n(GF(2))$.
Let $C_k$ denote the $k$-th column of $C$ and $D_{(k)}$ denote the $k$-th
row of $D$.
Consider the matrix
\begin{equation*}
 \Phi(C,D) 
           := (I_n\otimes C)
              \left(\sum_{i,j=1}^n E_{i,j}^{[n]}\otimes E_{i,j}^{[n]}\right)
              (I_n\otimes D)
            = \sum_{i,j=1}^n E_{i,j}^{[n]}\otimes (C_iD_{(j)}).
\end{equation*}
Then $CD=\tr_1(\Phi(C,D))$ is the sum of the matrices $C_kD_{(k)}$ on
the block diagonal of $\Phi(C,D)$. Similarly, $DC=\tr_2(\Phi(C,D))$.
We also have
\begin{math}
 \Phi(C,D)^{(m)} = \Phi\left(C^{(m)},D^{(m)}\right).
\end{math}
Thus $(CD)^{(m)}=C^{(m)}D^{(m)}$ if and only if
\begin{equation*}
 \tr_1\left(\Phi(C,D)^{(m)}\right) = \left(\tr_1(\Phi(C,D))\right)^{(m)},
\end{equation*}
or equivalently, since $(C_kD_{(k)})^{(m)} = C_{k}^{(m)}D_{(k)}^{(m)}$,
\begin{equation}
  \label{eq:psum}%
  \sum_{k=1}^n C_k^{(m)}D_{(k)}^{(m)}
   = \sum_{(k_1,\ldots,k_m)\in\{1,\ldots,n\}^m}
         (C_{k_1}\circ\cdots\circ C_{k_m})
         (D_{(k_1)}\circ\cdots\circ D_{(k_m)}).
\end{equation}
The set
$\{\,(k,\ldots,k)\,\}\subseteq\{1,\ldots,n\}^m$
selects the block diagonal matrices of the matrix $\Phi(C,D)^{(m)}$ in the sum,
and the non-diagonal block entries sum to zero. When $\mathbb{F}=\mathbb{R}$
and $C$ and $D$ are matrices with non-negative entries, then
$(C_{k_1}\circ\cdots\circ C_{k_m})=(D_{(k_1)}\circ\cdots\circ D_{(k_m)})^T=0$
for all non-diagonal block entries, i.e. $C$ and $D$ are in the monoid $\{\,PDQ^T\,\}$ 
of matrices with at most one non-zero entry in each row and column ($P$ and $Q$
are permutation matrices and $D$ is diagonal).

Let us now consider a stronger condition, namely we consider monoids satisfying
\begin{displaymath}
 \det(C^{(m)}) = \det(C)^m,\quad
 \det(D^{(m)}) = \det(D)^m\quad \text{and}\quad
 (CD)^{(m)} = C^{(m)}D^{(m)}
\end{displaymath}
for all $m\in\mathbb{N}$. On an ordered field we need only consider $m=2$,
with an additional condition on matrix multiplication of elements of the monoid.

The symmetric polynomials play a central role in this characterization. Let
$p_m(x_1,\ldots,x_n)=x_1^m+\cdots+x_n^m$ denote the $m$-th power sum, and
\begin{equation*}
 e_m(x_1,\ldots,x_n) = \sum_{j_1< j_2<\cdots<j_m} x_{j_1}\cdots x_{j_m}.
\end{equation*}
denote the $m$-th elementary symmetric sum \cite{macdonald2015}.
The first power sum and elementary symmetric sum coincide,
$e_1(x_1,\ldots,x_n) = p_1(x_1,\ldots,x_n)$.
The Newton-Girard identities relate the power sums and elementary symmetric sums
as follows \cite[p.~28]{macdonald2015}
\begin{equation}
 \label{eq:ng}
 p_m = \det
       \begin{pmatrix}
          e_1 & 1   & 0   & \cdots & 0 \\
         2e_2 & e_1 & 1   & \cdots & 0 \\
         3e_3 & e_2 & e_1 & \cdots & 0 \\
         \vdots & \vdots & \vdots & \ddots & \vdots \\
         me_m & e_{m-1} & e_{m-2} & \cdots & e_1
       \end{pmatrix}.
\end{equation}

We will make use of the following lemma in some of the characterizations
below. The proof is mechanical but straightforward, we include it here
for completeness.

\begin{lemma}
 \label{lem:ng}%
 Let $\mathbb{F}$ be a field with characteristic 0, and $x_1,\ldots, x_n\in\mathbb{F}$.
 The equations
 \begin{equation*}
  p_m(x_1,\ldots,x_n) = (e_1(x_1,\ldots,x_n))^m\qquad\text{for all~}m\in\mathbb{N}
 \end{equation*}
 hold if and only if $x_jx_k = 0$ for $j\neq k$ ($j,k\in\{1,\ldots,n\}$).
\end{lemma}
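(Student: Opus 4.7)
The reverse implication is straightforward: if $x_j x_k = 0$ for all $j \neq k$, then at most one of the $x_i$ is nonzero, so $p_m(x_1,\ldots,x_n)$ and $(e_1(x_1,\ldots,x_n))^m$ both reduce to the $m$-th power of this single (possibly zero) value.

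For the forward direction, the plan is to show by induction on $k \in \{2,3,\ldots,n\}$ that the hypothesis forces $e_k(x_1,\ldots,x_n) = 0$. I would use the recursive form of the Newton--Girard identities,
\begin{equation*}
 p_k = e_1 p_{k-1} - e_2 p_{k-2} + \cdots + (-1)^{k-2} e_{k-1} p_1 + (-1)^{k-1} k e_k,
\end{equation*}
obtainable from the determinantal form \eqref{eq:ng} by cofactor expansion along the first column. Under the inductive hypothesis $e_2 = \cdots = e_{k-1} = 0$, this collapses to $p_k = e_1 p_{k-1} + (-1)^{k-1} k e_k$. Combined with $p_{k-1} = e_1^{k-1}$ (the $m = k-1$ case of the assumption) and $p_k = e_1^k$, this yields $k e_k = 0$, and since $\mathbb{F}$ has characteristic $0$ we conclude $e_k = 0$.

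Once $e_2 = \cdots = e_n = 0$ is established, the $x_i$ are the roots of the polynomial $t^n - e_1 t^{n-1} = t^{n-1}(t - e_1)$, so each $x_i \in \{0, e_1\}$. Letting $r$ denote the number of indices with $x_i = e_1$, we then have $e_1 = \sum_i x_i = r e_1$, so $(r-1) e_1 = 0$. Either $e_1 = 0$ (and all $x_i$ vanish) or $r = 1$ (and exactly one $x_i$ is nonzero); in both cases $x_j x_k = 0$ for $j \neq k$, as required.

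The main obstacle, and the only place where characteristic $0$ is essential, is the implication $k e_k = 0 \Rightarrow e_k = 0$ in the inductive step, and analogously $(r-1) e_1 = 0 \Rightarrow r = 1$ or $e_1 = 0$ at the end; without this assumption, configurations such as several $x_i$ sharing a common value in small positive characteristic would be admitted. The remaining bookkeeping --- deriving the recursive form of Newton-Girard from \eqref{eq:ng} --- is routine and I do not foresee difficulty there.
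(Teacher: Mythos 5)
Your proof is correct, and its first half --- the induction via the Newton--Girard identities \eqref{eq:ng} showing $e_2=\cdots=e_n=0$, with characteristic $0$ invoked to pass from $ke_k=0$ to $e_k=0$ --- is exactly the paper's argument, stated more explicitly than in the paper (which just says ``continuing in the same manner''). Where you diverge is the final step. The paper finishes by a descent through the elementary symmetric polynomials: $e_n=x_1\cdots x_n=0$ forces some $x_j=0$, then $e_{n-1}=0$ reduces to the product of the remaining variables and forces another zero, and so on down to $e_2=x_{j'}x_{k'}=0$, which leaves at most one nonzero variable; this uses only that $\mathbb{F}$ has no zero divisors. You instead observe that $\prod_i(t-x_i)=t^{n-1}(t-e_1)$, so each $x_i\in\{0,e_1\}$, and then count: $e_1=re_1$ gives $(r-1)e_1=0$, hence $r=1$ or $e_1=0$. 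Both finishes are valid; yours is arguably cleaner to state, but it spends characteristic $0$ a second time (in characteristic $p$ one could have $r\equiv 1 \pmod p$ with $r>1$), whereas the paper's descent would survive in any integral domain once the $e_k$ are known to vanish. Since characteristic $0$ is hypothesized anyway, this costs nothing here, and your explicit identification of where the hypothesis is used is a genuine improvement in transparency over the paper's terse write-up.
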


\begin{proof}
 The equation $p_2(x_1,\ldots,x_n) = (e_1(x_1,\ldots,x_n))^2$ yields
 $e_2(x_1,\ldots,x_n) = 0$ in \eqref{eq:ng}, when $m=2$. Using
 $e_2(x_1,\ldots,x_n) = 0$ in \eqref{eq:ng}, when $m=3$ yields that
 $e_3(x_1,\ldots,x_n) = 0$. Continuing in the same manner for $p_4$,
 $p_5$, \ldots, $p_n$, we find that $e_n=x_1\cdots x_n=0$. It follows
 that there exists an $x_j$ satisfying $x_j=0$. Now
 $e_{n-1}(x_1,\ldots,x_n) = x_1\cdots x_{j-1}x_{j+1}\cdots x_n = 0$
 which yields another $x_k$ with $x_k=0$, where $k\neq j$. Proceeding
 in this way we arrive
 at $e_2(x_1,\ldots,x_n) = x_j'x_k' = 0$ for some $j'\neq k'$, where
 $x_j=0$ for $j\notin\{j',k'\}$.
\end{proof}

\begin{theorem}
 \label{thm:monequiv}%
 Let $\mathbb{F}$ be an ordered field and $M\subseteq M_n(\mathbb{F})$.
 Then the following statements are equivalent.
 \begin{enumerate}
  \item $M$ is a monoid of matrices satisfying
        \begin{math}
         (CD)^{(m)} = C^{(m)}D^{(m)}
        \end{math}
        for all $C,D\in M$ and $m\in\mathbb{N}$;
  \item $M$ is a monoid of matrices satisfying
        \begin{enumerate}
         \item $(CD)^{(2)}=C^{(2)} D^{(2)}$ for all $C,D\in M$,
         \item for all $C,D\in M$ and all $i,j\in\{1,\ldots,n\}$ there exists $k\in\{1,\ldots,n\}$
               such that $(CD)_{ij}=(C)_{ik}(D)_{kj}.$
        \end{enumerate}
 \end{enumerate}
\end{theorem}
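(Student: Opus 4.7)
The plan is to fix indices $i,j\in\{1,\ldots,n\}$, set $x_l := (C)_{il}(D)_{lj}$, and translate each entry-wise identity in the theorem into a statement about the symmetric quantities in $x_1,\ldots,x_n$. Since
\begin{equation*}
 \bigl((CD)^{(m)}\bigr)_{ij} = \Bigl(\sum_{l=1}^n x_l\Bigr)^{\!m}, \qquad
 \bigl(C^{(m)}D^{(m)}\bigr)_{ij} = \sum_{l=1}^n x_l^m,
\end{equation*}
the identity $(CD)^{(m)}=C^{(m)}D^{(m)}$ amounts, entry-wise, to the symmetric-polynomial identity $p_m(x_1,\ldots,x_n)=e_1(x_1,\ldots,x_n)^m$. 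All the work is then to move between these power-sum identities and pointwise vanishing of the $x_l$.

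For the direction (1)$\Rightarrow$(2), condition (a) is just the case $m=2$. For (b) I would invoke Lemma \ref{lem:ng} (which applies because an ordered field has characteristic~$0$): the full family of identities $p_m=e_1^m$ forces $x_l x_{l'}=0$ for $l\neq l'$, so at most one $x_l$ is nonzero. Choosing $k$ to be that index (or any index if all the $x_l$ vanish) yields $(CD)_{ij}=x_k=(C)_{ik}(D)_{kj}$, as required.

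For the converse (2)$\Rightarrow$(1), I would first use (b) to pick $k$ with $\sum_l x_l=x_k$. Squaring and applying (a) at entry $(i,j)$ gives $x_k^2=\sum_l x_l^2$, so $\sum_{l\neq k} x_l^2=0$. Here the order on $\mathbb{F}$ is essential: a sum of squares in an ordered field vanishes only when every summand does, forcing $x_l=0$ for each $l\neq k$. Once only $x_k$ can be nonzero, $p_m(x_1,\ldots,x_n)=x_k^m=e_1(x_1,\ldots,x_n)^m$ for every $m\in\mathbb{N}$, which gives (1).

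The one genuinely non-mechanical step is this last use of positivity. By itself, (a) only yields $e_2(x_1,\ldots,x_n)=0$, which is too weak to upgrade to the identities for arbitrary $m$; the role of (b) is precisely to re-express the linear sum $e_1$ as a single $x_k$, after which the order on $\mathbb{F}$ turns the quadratic cross-term cancellation from (a) into individual-term vanishing, recovering the full strength of Lemma \ref{lem:ng} from only the $m=2$ condition plus the selector (b).
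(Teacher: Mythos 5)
Your proposal is correct and follows essentially the same route as the paper: translate the $(i,j)$ entry of $(CD)^{(m)}=C^{(m)}D^{(m)}$ into the power-sum identity $p_m=e_1^m$ in the quantities $x_l=(C)_{il}(D)_{lj}$, apply Lemma \ref{lem:ng} for (1)$\Rightarrow$(2), and use the vanishing of a sum of squares in an ordered field for (2)$\Rightarrow$(1). You are somewhat more explicit than the paper in closing the loop (observing that at most one $x_l$ is nonzero and that this recovers $p_m=e_1^m$ for all $m$), but the ideas are identical.
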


\begin{proof}
 Each monoid includes the identity matrix, and the properties are
 all identically satisfied for $C=I_n$ and $D\in M$.
 Now we show that monoids of type 1 are also monoids of type 2.
 Comparing the entries of the matrices in equation \eqref{eq:psum} in row $i$
 and column $j$ we find
 \begin{equation*}
  \sum_{k=1}^n (C)_{ik}^m(D)_{kj}^m
   = \left(\sum_{k=1}^n(C)_{ik}(D)_{kj}\right)^m.
 \end{equation*}
 Applying Lemma \ref{lem:ng} yields for all $i,j,k,k'\in\{\,1,\dots,n\,\}$ with $k\neq k'$,
 \begin{equation}
  \label{eq:cond}%
  (C)_{ik}(D)_{kj}(C)_{ik'}(D)_{k'j}=0.
 \end{equation}
 Next we show that monoids of type 2 are also monoids of type 1.
 Assume $(CD)^{(2)}=C^{(2)}D^{(2)}$ and that $(CD)_{ij}=(C)_{ik}(D)_{kj}$ for some $k\in\{1,\ldots,n\}$.
 Then
 \begin{equation*}
  ((C)_{ik}(D)_{kj})^2 = ((CD)^{(2)})_{ij} = (C^{(2)}D^{(2)})_{ij} = \sum_{k'=1}^n (C)_{ik'}^2(D)_{k'j}^2
 \end{equation*}
 so that
 \begin{math}
  \displaystyle
  \sum_{\genfrac{}{}{0pt}{2}{k'=1}{k'\neq k}}^n (C)_{ik'}^2(D)_{k'j}^2 = 0
 \end{math}
 and, since $\mathbb{F}$ is an ordered field, equation \eqref{eq:cond} follows.
\end{proof}

\begin{corollary}
 Let $\mathbb{F}$ be an ordered field and let $M\subseteq M_n(\mathbb{F})$.
 Then the following statements are equivalent.
 \begin{enumerate}
  \item $M$ is a monoid of matrices closed under the matrix transpose and satisfying
        \begin{math}
         (CD)^{(m)} = C^{(m)}D^{(m)}
        \end{math}
        for all $C,D\in M$ and $m\in\mathbb{N}$;
  \item $M$ is a submonoid of the monoid
        of matrices with at most one non-zero entry in each row and column.
 \end{enumerate}
\end{corollary}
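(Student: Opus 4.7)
The plan is to prove the two implications separately, with the easy direction being (2) $\Rightarrow$ (1) and the substantive direction being (1) $\Rightarrow$ (2), for which I would exploit closure under transpose together with the condition \eqref{eq:cond} obtained in the proof of Theorem \ref{thm:monequiv}.

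For (2) $\Rightarrow$ (1), I would first note that the set $\mathcal{P}_n$ of matrices in $M_n(\mathbb{F})$ with at most one non-zero entry in each row and column is closed under matrix multiplication, closed under transposition, and contains $I_n$. Any submonoid $M \subseteq \mathcal{P}_n$ inherits these properties. For $C,D \in M$ the sum $(CD)_{ij} = \sum_k (C)_{ik}(D)_{kj}$ has at most one non-zero term, since there is at most one $k$ for which $(C)_{ik} \neq 0$. Consequently $\bigl((CD)_{ij}\bigr)^m = \sum_k (C)_{ik}^m(D)_{kj}^m$ for every $m$, yielding $(CD)^{(m)} = C^{(m)}D^{(m)}$.

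For (1) $\Rightarrow$ (2), I would invoke Theorem \ref{thm:monequiv} which, under hypothesis (1), gives equation \eqref{eq:cond}: for all $C,D \in M$ and all $i,j$ and $k \neq k'$,
\begin{equation*}
 (C)_{ik}(D)_{kj}(C)_{ik'}(D)_{k'j} = 0.
\end{equation*}
Fix $C \in M$; by closure under transpose, $C^T \in M$, so I would specialize the above to the pair $(C, C^T)$. Using $(C^T)_{kj} = (C)_{jk}$ and then setting $j = i$ yields
\begin{equation*}
 (C)_{ik}^2 \, (C)_{ik'}^2 = 0 \quad \text{for all } k \neq k'.
\end{equation*}
Because $\mathbb{F}$ is ordered, a product of squares vanishes only if one of the factors does, so $(C)_{ik}(C)_{ik'} = 0$ for $k \neq k'$; that is, row $i$ of $C$ contains at most one non-zero entry. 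Applying the same argument to the pair $(C^T, C)$ (again valid since $C^T \in M$) and setting $j = i$ gives $(C)_{ki}^2 (C)_{k'i}^2 = 0$ for $k \neq k'$, so each column of $C$ has at most one non-zero entry. Thus $C \in \mathcal{P}_n$, and $M \subseteq \mathcal{P}_n$ as required. The main obstacle is essentially conceptual rather than computational, namely recognizing that the bilinear constraint \eqref{eq:cond} can be collapsed to a constraint on a single matrix by feeding in $C^T$ as the second argument, and that the ordered-field hypothesis is exactly what converts the vanishing of a product of squares into the vanishing of a product of entries.
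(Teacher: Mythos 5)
Your main direction, (1) $\Rightarrow$ (2), is correct and is exactly the route the paper intends (the corollary is stated without its own proof, but it is meant to drop out of Theorem \ref{thm:monequiv}): hypothesis (1) yields equation \eqref{eq:cond} via Lemma \ref{lem:ng}, and feeding the pairs $(C,C^T)$ and $(C^T,C)$ into \eqref{eq:cond} and setting $j=i$ collapses it to $(C)_{ik}^2(C)_{ik'}^2=0$ and $(C)_{ki}^2(C)_{k'i}^2=0$ for $k\neq k'$, whence each row and each column of $C$ has at most one non-zero entry. One small quibble: at that last step the ordered-field hypothesis is not what does the work --- in any field $a^2b^2=(ab)^2=0$ already forces $ab=0$; the ordering (in particular characteristic $0$) is needed earlier, inside Lemma \ref{lem:ng} and hence in the derivation of \eqref{eq:cond} itself.

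The gap is in (2) $\Rightarrow$ (1). You assert that any submonoid of the monoid $\mathcal{P}_n$ of matrices with at most one non-zero entry in each row and column ``inherits'' closure under transposition. That is false: for $n\geq 2$ the set $M=\{\,0_n,\,I_n,\,E_{12}^{[n]}\,\}$ is a submonoid of $\mathcal{P}_n$ (since $E_{12}^{[n]}E_{12}^{[n]}=0_n$), yet $(E_{12}^{[n]})^T=E_{21}^{[n]}\notin M$. Your verification of $(CD)^{(m)}=C^{(m)}D^{(m)}$ for $C,D\in\mathcal{P}_n$ is fine --- each sum $\sum_k(C)_{ik}(D)_{kj}$ has at most one non-zero term --- but the transpose-closure clause of statement (1) simply does not follow from statement (2) as written. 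This is really a defect of the corollary's statement rather than of your strategy (statement (2) ought to require the submonoid to be closed under transposition as well, or ``submonoid'' should be read as ``transpose-closed submonoid''); still, a proof cannot paper over it with a false inheritance claim, so you should either exhibit the counterexample and state the amended equivalence, or restrict the claimed implication to the Hadamard-power identity alone.
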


If we insist that Theorem \ref{thm:sum} holds for all $m\in\mathbb{N}$, applying Lemma \ref{lem:ng}
yields the following theorem, which we state without proof.
\begin{theorem}
 \label{thm:sum2}%
 Let $B,C\in M_n(\mathbb{F})$. Then
 \begin{equation*}
  \det_1(A\otimes (B+C)) = \det_1(A\otimes B)+\det_1(A\otimes C)
 \end{equation*}
 holds for all $m\in\mathbb{N}$ and $A\in M_{m}(\mathbb{F})$
 if and only if $B\circ C=0_n$.
\end{theorem}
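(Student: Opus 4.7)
The plan is to reduce both directions to the entrywise Hadamard-power identity $(B+C)^{(m)} = B^{(m)} + C^{(m)}$ via Lemma~\ref{lem:hp}, and then finish the necessity direction by applying Lemma~\ref{lem:ng} in each matrix position.

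For the sufficiency direction, I would simply rewrite $A\otimes(B+C) = A\otimes B + A\otimes C$ and apply Theorem~\ref{thm:sum} to this Kronecker sum (taking the first tensor factor to be $A$ in both summands, and the second-factor matrices to be $B$ and $C$). The hypothesis $B\circ C = 0_n$ is exactly the one required, so its conclusion delivers the stated identity for every $m\in\mathbb{N}$ and every $A\in M_m(\mathbb{F})$ with no further work.

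For necessity, I would specialize the assumed identity to $A = I_m$. Since $\det(I_m) = 1$, Lemma~\ref{lem:hp} collapses the identity to
\begin{equation*}
 (B+C)^{(m)} = B^{(m)} + C^{(m)}\qquad\text{for every }m\in\mathbb{N}.
\end{equation*}
Reading off the $(i,j)$-entry with $x_1 := (B)_{ij}$ and $x_2 := (C)_{ij}$ gives $p_m(x_1,x_2) = (e_1(x_1,x_2))^m$ for every $m$, so Lemma~\ref{lem:ng} applied with $n=2$ forces $x_1 x_2 = 0$. Performing this at each $(i,j)$ yields $B\circ C = 0_n$.

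The one point worth flagging, rather than a real obstacle, is the field assumption: Lemma~\ref{lem:ng} is stated for characteristic~$0$, so this hypothesis is implicit in the necessity direction. Over $GF(2)$, for instance, Frobenius already makes $(B+C)^{(2^k)} = B^{(2^k)} + C^{(2^k)}$ hold for all $k$ with no Hadamard orthogonality at all, so the theorem as phrased genuinely relies on that restriction. No other step in the argument requires anything beyond routine substitution.
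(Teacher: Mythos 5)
Your proof is correct and follows exactly the route the paper indicates (the paper states this theorem without proof, remarking only that it follows by applying Lemma~\ref{lem:ng}): sufficiency via Theorem~\ref{thm:sum} applied to $A\otimes B + A\otimes C$, and necessity by specializing to $A=I_m$, invoking Lemma~\ref{lem:hp}, and applying Lemma~\ref{lem:ng} entrywise. Your observation that the necessity direction inherits the characteristic-$0$ hypothesis of Lemma~\ref{lem:ng} --- and that the statement genuinely fails over $GF(2)$ --- is a valid caveat that the paper leaves implicit.
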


For the $2\times 2$ matrices, we find that the property $\det(C^{(m)}) = \det(C)^m$ for all $m\in\mathbb{N}$
is satisfied only when $C$ is triangular, which is consistent with the requirement that $(C^2)^{(2)}=(C^{(2)})^{2}$.
Let $D_n(\mathbb{F})$ denote the set of diagonal $n\times n$ matrices over $\mathbb{F}$, $R_n(\mathbb{F})$ the
set of matrices with at most one non-zero row, and $C_n(\mathbb{F})$ the set of matrices with at most one
non-zero column.

\begin{theorem}
 Let $\mathbb{F}$ be an ordered field and $M\subseteq M_2(\mathbb{F})$.
 Then the following statements are equivalent.
 \begin{enumerate}
  \item $M$ is a monoid of matrices satisfying
        \begin{enumerate}
         \item $\det(C^{(m)}) = \det(C)^m$ for all $C\in M$ and $m\in\mathbb{N}$,
         \item $(CD)^{(m)} = C^{(m)}D^{(m)}$ for all $C,D\in M$ and $m\in\mathbb{N}$;
        \end{enumerate}
  \item $M$ is a monoid of matrices with 
        \begin{enumerate}
         \item $M\subseteq D_2(\mathbb{F})\cup C_2(\mathbb{F})$, or
         \item $M\subseteq D_2(\mathbb{F})\cup R_2(\mathbb{F})$.
        \end{enumerate}
 \end{enumerate}
\end{theorem}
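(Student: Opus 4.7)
The plan is to prove the two directions separately. For $(2)\Rightarrow(1)$ I would observe that every matrix in $D_2(\mathbb{F})\cup C_2(\mathbb{F})$ has at most one non-zero entry in each row, and dually every matrix in $D_2(\mathbb{F})\cup R_2(\mathbb{F})$ has at most one non-zero entry in each column. In either case 2(a) or 2(b), this forces the sum $(CD)_{ij}=\sum_k (C)_{ik}(D)_{kj}$ to collapse to a single term for any $C,D\in M$, from which $(CD)^{(m)}=C^{(m)}D^{(m)}$ follows entry-wise for every $m\in\mathbb{N}$. Since every element of $D_2(\mathbb{F})\cup R_2(\mathbb{F})\cup C_2(\mathbb{F})$ in $M_2(\mathbb{F})$ is upper or lower triangular, the identity $\det(C^{(m)})=\det(C)^m$ is then immediate from the triangular determinant formula.

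For $(1)\Rightarrow(2)$ the remark preceding the theorem already ensures that every $C\in M$ is triangular. The first substantive step is to rule out strictly triangular elements of $M$ having both diagonal entries non-zero. For a strict upper triangular $U=\left(\begin{smallmatrix} a & b \\ 0 & d \end{smallmatrix}\right)\in M$ with $b\neq 0$, expanding $(U^2)^{(2)}=(U^{(2)})^2$ at the $(1,2)$ entry gives $b^2(a+d)^2 = b^2(a^2+d^2)$, whence $ad=0$ because $b\neq 0$ and $\mathbb{F}$ has characteristic $0$. So $U\in C_2(\mathbb{F})$ (when $a=0$) or $U\in R_2(\mathbb{F})$ (when $d=0$); the symmetric analysis of strict lower triangular elements then yields $M\subseteq D_2(\mathbb{F})\cup R_2(\mathbb{F})\cup C_2(\mathbb{F})$.

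The remaining step is to preclude $M$ simultaneously containing an element $R\in R_2(\mathbb{F})\setminus(C_2(\mathbb{F})\cup D_2(\mathbb{F}))$ (whose unique non-zero row has both entries non-zero) and an element $C\in C_2(\mathbb{F})\setminus(R_2(\mathbb{F})\cup D_2(\mathbb{F}))$. This splits into four sub-cases according to the triangular type of $R$ and of $C$. When $R$ and $C$ have opposite triangular type, a direct $2\times 2$ computation shows that one of $RC$, $CR$ has both off-diagonal entries non-zero, contradicting the triangularity of elements of $M$. When $R$ and $C$ share triangular type, expanding $(RC)^{(2)}=R^{(2)}C^{(2)}$ at the unique off-diagonal entry reduces, after cancellation and using characteristic $0$, to the vanishing of a product of four scalars that are non-zero by assumption. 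The main obstacle is the bookkeeping of these four sub-cases, but each individual verification is a routine $2\times 2$ calculation.
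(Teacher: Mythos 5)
Your overall strategy coincides with the paper's. For $(2)\Rightarrow(1)$ you note that the sum $\sum_k (C)_{ik}(D)_{kj}$ collapses to a single term and that every matrix in $D_2(\mathbb{F})\cup R_2(\mathbb{F})\cup C_2(\mathbb{F})$ is triangular; for $(1)\Rightarrow(2)$ you first confine $M$ to $D_2(\mathbb{F})\cup R_2(\mathbb{F})\cup C_2(\mathbb{F})$ and then show $M$ cannot meet both $R_2(\mathbb{F})\setminus C_2(\mathbb{F})$ and $C_2(\mathbb{F})\setminus R_2(\mathbb{F})$. Your four sub-cases (opposite triangular type killed because a product has all four entries non-zero; same type killed by comparing $(RC)^{(2)}$ with $R^{(2)}C^{(2)}$ at the single off-diagonal entry, which yields $2abxy=0$) are precisely the paper's dichotomy $j=k$ versus $j\neq k$ for $A=\mathbf{x}\mathbf{e}_j^T$, $B=\mathbf{e}_k\mathbf{y}^T$, written out in coordinates. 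All the computations you describe check out.

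The one genuine gap is the opening step of $(1)\Rightarrow(2)$. You import triangularity of every $C\in M$ from the remark preceding the theorem, but that remark, read as a statement about condition 1(a) alone, is not correct: any rank-one matrix with all four entries non-zero satisfies $\det(C^{(m)})=\det(C)^m=0$ for every $m$ without being triangular, as the paper itself observes in its discussion of the $n=2$ case. Consequently your case analysis never treats a matrix with $c_{12}c_{21}\neq 0$, and nothing you write rules one out. The exclusion does follow from the hypotheses by the same device you already use: comparing $(C^2)^{(2)}$ with $(C^{(2)})^2$ at the $(1,1)$ and $(2,2)$ entries gives $2c_{11}^2c_{12}c_{21}=2c_{22}^2c_{12}c_{21}=0$, so $c_{12}c_{21}\neq 0$ forces $c_{11}=c_{22}=0$; then condition 1(a) with $m=2$ gives
\begin{equation*}
 -c_{12}^2c_{21}^2=\det\bigl(C^{(2)}\bigr)=\det(C)^2=c_{12}^2c_{21}^2,
\end{equation*}
hence $c_{12}c_{21}=0$ in characteristic $0$, a contradiction. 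With these two lines inserted your argument is complete and is essentially the proof given in the paper.
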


\begin{proof}
 First we prove that monoids of type 1 are monoids of type 2.
 Let
 \begin{equation*}
  C = \begin{pmatrix} c_{11} & c_{12} \cr c_{21} & c_{22} \end{pmatrix} \in M.
 \end{equation*}
 As in the proof of Theorem \ref{thm:monequiv} above, the condition
 $(C^2)^{(2)}=C^{(2)}C^{(2)}$ yields
 \begin{equation*}
  (c_{11}c_{11})(c_{12}c_{21}) = 
  (c_{22}c_{22})(c_{21}c_{12}) = 
  (c_{11}c_{12})(c_{12}c_{22}) = 
  (c_{21}c_{11})(c_{22}c_{21}) = 0.
 \end{equation*}
 The zero matrix satisfies all the conditions, so assume $C\neq 0$.
 If either of $c_{11}$ or $c_{22}$ are non-zero, then $c_{12}c_{21}=0$ and $C$ is
 a triangular matrix with $\det(C^{(m)})=\det(C)^m$. Then $C$ has one of the forms
 \begin{equation*}
  \begin{pmatrix} c_{11} & 0 \\ 0 & c_{22} \end{pmatrix}
  ,\qquad
  \begin{pmatrix} c_{11} & c_{12} \\ 0 & 0 \end{pmatrix}
  ,\qquad
  \begin{pmatrix} c_{11} & 0 \\ c_{21} & 0 \end{pmatrix}
  ,\qquad
  \begin{pmatrix} 0 & c_{12} \\ 0 & c_{22} \end{pmatrix}
  ,\qquad
  \begin{pmatrix} 0 & 0 \\ c_{21} & c_{22} \end{pmatrix}.
 \end{equation*}
 If $c_{11}=c_{22}=0$, then
 $\det(C^{(m)}) = -c_{12}^mc_{21}^m = (-1)^m c_{12}^mc_{21}^m = \det(C)^m.$
 When $m=2$ we find again that $C$ must be of the form
 \begin{displaymath}
  \begin{pmatrix} 0 & c_{12} \\ 0 & 0 \end{pmatrix}
  ,\qquad
  \begin{pmatrix} 0 & 0 \\ c_{21} & 0 \end{pmatrix}.
 \end{displaymath}
 Thus every monoid $M$ of type 1 satisfies
 $M\subseteq D_2(\mathbb{F})\cup C_2(\mathbb{F})\cup R_2(\mathbb{F})$.
 Let $\{\,\mathbf{e}_1,\,\mathbf{e}_2\,\}$ be the standard basis
 in $\mathbb{F}^2$.
 Suppose that $\mathbf{x},\mathbf{y}\in\mathbb{F}^2$ and $j,k\in\{1,2\}$.
 Let $A=\mathbf{x}\mathbf{e}_j^T\in M\cap(C_2(\mathbb{F})\setminus R_2(\mathbb{F}))$
 and
 $B=\mathbf{e}_k\mathbf{y}^T\in M\cap(R_2(\mathbb{F})\setminus C_2(\mathbb{F}))$.
 Then
 \begin{equation*}
  AB = \mathbf{x}\mathbf{e}_j^T\mathbf{e}_k\mathbf{y}^T=\delta_{jk}\mathbf{x}\mathbf{y}^T \in M.
 \end{equation*}
 Thus $\mathbf{x}$ or $\mathbf{y}$ has at most one non-zero entry, which
 contradicts $A\notin R_2(\mathbb{F})$ or contradicts $B\notin C_2(\mathbb{F})$.
 On the other hand,
 \begin{equation*}
  BA = \mathbf{e}_k\mathbf{y}^T\mathbf{x}\mathbf{e}_j^T
     = (\mathbf{y}^T\mathbf{x})\mathbf{e}_k\mathbf{e}_j^T \in M
 \end{equation*}
 if and only if $(\mathbf{y}^T\mathbf{x})^m = (\mathbf{y}^T\mathbf{x})^{(m)}$
 for all $m\in\mathbb{N}$,
 and again $\mathbf{e}_k\mathbf{y}^T\in C_2(\mathbb{F})$ or
 $\mathbf{x}\mathbf{e}_j^T\in R_2(\mathbb{F})$ which yields
 a contradiction. Thus $M\cap(C_2(\mathbb{F})\setminus R_2(\mathbb{F}))=\emptyset$
 or $M\cap(R_2(\mathbb{F})\setminus C_2(\mathbb{F}))=\emptyset$.
 It follows that each monoid of type 1 is a monoid of type 2.

 It is straightforward to
 verify that every monoid of type 2 is also of type 1. Each of the three types
 of matrices (diagonal, at most one non-zero row, at most one non-zero column)
 immediately satisfy $\det(C^{(m)}) = \det(C)^m$ for all $m\in\mathbb{N}$.
 If $D$ is diagonal then $(CD)^{(m)}=C^{(m)}D^{(m)}$ and $(DC)^{(m)}=D^{(m)}C^{(m)}$
 for all $C\in M$.
 In $C_2(\mathbb{F})$ and $R_2(\mathbb{F})$, we have
 \begin{align*}
  ((\mathbf{x}\mathbf{e}_j^T)(\mathbf{y}\mathbf{e}_k^T))^{(m)}
   &= (\mathbf{e}_j^T\mathbf{y})^m\mathbf{x}^{(m)}\mathbf{e}_k^T
   = (\mathbf{x}\mathbf{e}_j^T)^{(m)} (\mathbf{y}\mathbf{e}_k^T)^{(m)}, \\
  ((\mathbf{e}_j\mathbf{x}^T)(\mathbf{e}_k\mathbf{y}^T))^{(m)}
   &= (\mathbf{x}^T\mathbf{e}_k)^m\mathbf{e}_j{\mathbf{y}^{(m)}}^T
   = (\mathbf{e}_j\mathbf{x}^T)^{(m)} (\mathbf{e}_k\mathbf{y}^T)^{(m)},
 \end{align*}
 for any column vectors $\mathbf{x},\mathbf{y}\in\mathbb{F}^2$ and $j,k\in\{1,2\}$.
\end{proof}

We note that some of the observations in the previous proof hold in general.
In a monoid $M\subseteq M_n(\mathbb{F})$, over an ordered field $\mathbb{F}$
and satisfying Theorem \ref{thm:monequiv},
we have for each $C\in M$ and $i,j\in\{1,\ldots,n\}$
\begin{gather}
 \label{eq:sub22}%
 \begin{split}
  (C)_{ii}(C)_{ii}(C)_{ij}(C)_{ji} = 
  (C)_{ii}(C)_{ij}(C)_{ij}(C)_{jj} = 0, \\
  (C)_{jj}(C)_{jj}(C)_{ji}(C)_{ij} = 
  (C)_{ji}(C)_{ii}(C)_{jj}(C)_{ji} = 0.
 \end{split}
\end{gather}
This means that every $2\times 2$ submatrix of $C\in M$ with
entries in columns and rows $i$ and $j$ has one of the forms
\begin{displaymath}
 \begin{pmatrix} a & 0 \\ 0 & b \end{pmatrix}
 ,\qquad
 \begin{pmatrix} 0 & a \\ b & 0 \end{pmatrix}
 ,\qquad
 \begin{pmatrix} a & b \\ 0 & 0 \end{pmatrix}
 ,\qquad
 \begin{pmatrix} 0 & 0 \\ a & b \end{pmatrix}
 ,\qquad
 \begin{pmatrix} a & 0 \\ b & 0 \end{pmatrix}
 ,\qquad
 \begin{pmatrix} 0 & a \\ 0 & b \end{pmatrix}.
\end{displaymath}
The set of equations given by \eqref{eq:sub22} is invariant under
any permutation similarity transformation, i.e. interchanging
$i\leftrightarrow k$ and $j\leftrightarrow l$ leaves \eqref{eq:sub22}
invariant (since we consider all $i$ and $j$). Consider $M_3(\mathbb{F})$
over an ordered field $\mathbb{F}$. The set of equations in \eqref{eq:sub22}
concern three $2\times 2$ submatrices, two of which lie on the diagonal
(indicated by squares) and the outermost $2\times 2$ submatrix, in other
words, we consider the corners of each of the squares:

\begin{equation*}
 C = 
 \left(\,
 \renewcommand{\arraystretch}{1.6}
 \begin{tabular}{|ccc|}
  \cline{1-3}
  \multicolumn{1}{|c}{$c_{11}$} & \multicolumn{1}{c|}{$c_{12}$} & $c_{13}$ \\
  \cline{2-3}
  \multicolumn{1}{|c}{$c_{21}$} & \multicolumn{1}{|c|}{$c_{22}$} & \multicolumn{1}{c|}{$c_{23}$} \\
  \cline{1-2}
  $c_{31}$ & \multicolumn{1}{|c}{$c_{32}$} & \multicolumn{1}{c|}{$c_{33}$} \\
  \cline{1-3}
 \end{tabular}
 \,\right)
\end{equation*}

Any reordering $PCP^T$ of the rows and columns of the matrix $C$ preserves the equations \eqref{eq:sub22}
(here, $P$ is a permutation matrix).
We also have that
\begin{equation*}
 \det(PCP^T)^m = \det((PCP^T)^{(m)})
\end{equation*}
if and only if $\det(C)^m = \det(C^{(m)})$, and that
\begin{equation*}
 ((PCP^T)(PDP^T))^{(m)} = (PCP^T)^{(m)}(PDP^T)^{(m)}
\end{equation*}
if and only if $(CD)^{(m)}=C^{(m)}D^{(m)}$.
Consequently, we may reorder the rows and columns (in the same way) so that non-zero diagonal
entries precede the zero diagonal entries. In the interest of brevity we only consider these
forms of matrices while noting that the observations hold also under permutation similarity.
We consider the 4 cases $c_{11}c_{22}c_{33}\neq0$, $c_{11}c_{22}\neq0$ and $c_{33}=0$,
$c_{11}\neq 0$ and $c_{22}=c_{33}=0$; and $c_{11}=c_{22}=c_{33}=0.$ In the first case
($c_{11}c_{22}c_{33}\neq0$) we have a diagonal matrix and $\det(C)^m=\det(C^{(m)})$ for
all $m\in\mathbb{N}$. In the second case ($c_{11}c_{22}\neq0$ and $c_{33}=0$) we have one of the
matrices
\begin{displaymath}
 \begin{pmatrix}
  c_{11} & 0      & c_{13} \\
  0      & c_{22} & c_{23} \\
  0      & 0      & 0
 \end{pmatrix},\quad
 \begin{pmatrix}
  c_{11} & 0      & 0      \\
  0      & c_{22} & c_{23} \\
  c_{31} & 0      & 0
 \end{pmatrix},\quad
 \begin{pmatrix}
  c_{11} & 0      & c_{13} \\
  0      & c_{22} & 0      \\
  0      & c_{32} & 0
 \end{pmatrix},\quad
 \begin{pmatrix}
  c_{11} & 0      & 0      \\
  0      & c_{22} & 0      \\
  c_{31} & c_{32} & 0
 \end{pmatrix},
\end{displaymath}
and for each matrix $\det(C)^m=\det(C^{(m)})=0$. In the third case
($c_{11}\neq 0$ and $c_{22}=c_{33}=0$) we have one of the matrices
\begin{displaymath}
 \begin{pmatrix}
  c_{11} & 0      & c_{13} \\
  0      & 0      & c_{23} \\
  0      & c_{32} & 0
 \end{pmatrix},\qquad
 \begin{pmatrix}
  c_{11} & 0      & 0      \\
  0      & 0      & c_{23} \\
  c_{31} & c_{32} & 0
 \end{pmatrix}
\end{displaymath}
and for each matrix $\det(C)=-c_{11}c_{23}c_{32}$ so that $\det(C)^2=\det(C^{(2)})$
if and only if $c_{23}c_{32}=0$.
In the fourth case ($c_{11}=c_{22}=c_{33}=0$) we have the matrix
\begin{displaymath}
 \begin{pmatrix}
  0      & c_{12} & c_{13} \\
  c_{21} & 0      & c_{23} \\
  c_{31} & c_{32} & 0
 \end{pmatrix}
\end{displaymath}
and within the monoid $M\subseteq M_3(\mathbb{F})$ satisfying Theorem \ref{thm:monequiv}
we have
\begin{equation*}
 \det C^m = (c_{12}c_{23}c_{31} + c_{13}c_{21}c_{32})^m = c_{12}^mc_{23}^mc_{31}^m + c_{13}^mc_{21}^mc_{32}^m
         = \det{C^{(m)}}
\end{equation*}
since
\begin{equation*}
 (c_{12}c_{23}c_{31})(c_{13}c_{21}c_{32}) = (c_{12}c_{21}c_{13}c_{31}) c_{23}c_{32} = 0.
\end{equation*}
The fourth case then yields one of the matrices (up to permutation similarity)
\begin{displaymath}
 \begin{pmatrix}
  0      & c_{12} & c_{13} \\
  c_{21} & 0      & c_{23} \\
  0      & 0      & 0
 \end{pmatrix},\quad
 \begin{pmatrix}
  0      & c_{12} & 0      \\
  c_{21} & 0      & c_{23} \\
  c_{31} & 0      & 0
 \end{pmatrix},\quad
 \begin{pmatrix}
  0      & c_{12} & c_{13} \\
  c_{21} & 0      & 0      \\
  0      & c_{32} & 0
 \end{pmatrix},\quad
 \begin{pmatrix}
  0      & c_{12} & 0      \\
  c_{21} & 0      & 0      \\
  c_{13} & c_{32} & 0
 \end{pmatrix}.
\end{displaymath}

These observations in $M_2(\mathbb{F})$ and $M_3(\mathbb{F})$ may
be applied in $M_4(\mathbb{F})$, so we consider the $2\times 2$ and
$3\times 3$ submatrices in $M_4(\mathbb{F})$:
\begin{equation*}
 C = 
 \left(\,
 \renewcommand{\arraystretch}{1.6}
 \begin{tabular}{cccc}
  \cline{1-3}
  \multicolumn{1}{|c}{$c_{11}$}
   & \multicolumn{1}{c|}{$c_{12}$}
   & \multicolumn{1}{c|}{$c_{13}$}
   & $c_{14}$ \\
  \cline{2-4}
  \multicolumn{1}{|c}{$c_{21}$}
   & \multicolumn{1}{|c|}{$c_{22}$}
   & \multicolumn{1}{c|}{$c_{23}$}
   & \multicolumn{1}{c|}{$c_{24}$} \\
  \cline{1-4}
  \multicolumn{1}{|c}{$c_{31}$}
   & \multicolumn{1}{|c}{$c_{32}$}
   & \multicolumn{1}{|c|}{$c_{33}$}
   & \multicolumn{1}{c|}{$c_{34}$} \\
  \cline{1-3}
  $c_{41}$
   & \multicolumn{1}{|c}{$c_{42}$}
   & \multicolumn{1}{|c}{$c_{43}$}
   & \multicolumn{1}{c|}{$c_{44}$} \\
  \cline{2-4}
 \end{tabular}
 \,\right)
 =
 \left(\,
 \renewcommand{\arraystretch}{1.6}
 \begin{tabular}{|cccc|}
  \cline{1-4}
  \multicolumn{1}{|c}{$c_{11}$}
   & $c_{12}$
   & \multicolumn{1}{c|}{$c_{13}$}
   & $c_{14}$ \\
  \cline{2-4}
  \multicolumn{1}{|c}{$c_{21}$}
   & \multicolumn{1}{|c}{$c_{22}$}
   & \multicolumn{1}{c|}{$c_{23}$}
   & \multicolumn{1}{c|}{$c_{24}$} \\
  \multicolumn{1}{|c}{$c_{31}$}
   & \multicolumn{1}{|c}{$c_{32}$}
   & \multicolumn{1}{c|}{$c_{33}$}
   & \multicolumn{1}{c|}{$c_{34}$} \\
  \cline{1-3}
  $c_{41}$
   & \multicolumn{1}{|c}{$c_{42}$}
   & $c_{43}$
   & \multicolumn{1}{c|}{$c_{44}$} \\
  \cline{1-4}
 \end{tabular}
 \,\right).
\end{equation*}
In this way, we can proceed to characterize monoids of Corollary \ref{cor:moncor}
in $M_{n+1}(\mathbb{F})$ based on observations in monoids in $M_{n}(\mathbb{F})$,
when $\mathbb{F}$ is an ordered field.

The fact that $\det(C^{(m)})=\det(C)^m$ can be expressed in terms
of the Grassmann product (see for example \cite[p.~173]{merris1997}),
\begin{equation*}
 C_1^{(m)}\wedge\cdots\wedge C_n^{(m)} = (C_1\wedge\cdots\wedge C_n)^{(m)}
\end{equation*}
where
\begin{equation*}
 C_1\wedge\cdots\wedge C_n = \det(C)\mathbf{e}_1\wedge\cdots\wedge\mathbf{e}_n
\end{equation*}
and $\{\,\mathbf{e}_1,\,\ldots,\,\mathbf{e}_n\,\}$ is the standard basis
in $\mathbb{F}^n$. Similarly, $\det(D^{(m)})=\det(D)^m$ is equivalent to
\begin{equation*}
 D_{(1)}^{(m)}\wedge\cdots\wedge D_{(n)}^{(m)} = (D_{(1)}\wedge\cdots\wedge D_{(n)})^{(m)}.
\end{equation*}
To summarize, the Hadamard power distributes over matrix products and
over column/row-wise Grassman products in the monoid of Corollary \ref{cor:moncor}.

\subsection{Determinant-roots}

\newcommand{\Fmul}{\mathbb{F}_{\mathord{\times}}}

The preceding section suggests that a monoid with appropriate
properties may yield a partial determinant that completes in a straightforward way.
In this section we define such monoids and a ``determinant-root'' operation
$\Det$ which is completable on Kronecker products of matrices in these monoids.
First, let us define the $m$-th roots on the multiplicative group $\Fmul$ of $\mathbb{F}$.

Let $G$ be a multiplicative abelian group, and $R_m$ be the subgroup 
\begin{equation*}
 R_m=\{\,g\in G\,:\,g^m=1\,\}
\end{equation*}
of $G$ (which generates the equivalence classes $a\cdot R_m$ where
$a\cdot R_m=b\cdot R_m$ if and only if $a^m=b^m$ for $a,b\in G$). We will
	write $a\cdot R_m=\sqrt[m]{b}$ when $a^m=b$. If no such $a$
exists for a given $b\in G$, then $\sqrt[m]{b}$ is undefined. This definition
of the $m$-th root may be extended with $\sqrt[m]{0}:=0\cdot R_m$ when $G=\Fmul$ is the
multiplicative group of $\mathbb{F}$ and $0$ is the additive identity in $\mathbb{F}$. We note
that $\sqrt[m]{ab}=\sqrt[m]{\vphantom{b}a}\cdot\sqrt[m]{b}$ when the relevant
$m$-th roots exist. The non-zero $m$-th roots of $G$ form a multiplicative group, and
the matrices $M\in M_n(\mathbb{F})$ where $\sqrt[n]{\det(M)}$ exists
form a monoid $D_n(\mathbb{F})$.

\begin{definition}
 We define the determinant-root $\Det$ by
 \begin{displaymath}
  \Det(M):=\sqrt[n]{\det(M)}
 \end{displaymath}
 for all $M\in D_n(\mathbb{F})$.
\end{definition}

This definition provides the usual multiplicative property
\begin{equation*}
 \Det(AB)=\Det(A)\Det(B)
\end{equation*}
for $A,B\in D_n(\mathbb{F})$.
There is a natural embedding $h_{m,n}:a\cdot R_m\mapsto a^n\cdot R_{mn}$
and we may define the product $\ast$ as follows
\begin{equation*}
 (a\cdot R_m)\ast (b\cdot R_n) := h_{m,n}(a\cdot R_m)h_{n,m}(b\cdot R_n) = (a^nb^m)R_{mn}.
\end{equation*}
Consequently, $\sqrt[m]{\vphantom{b}a}\ast\sqrt[n]{b} = \sqrt[mn]{a^nb^m}$ when the given roots exist.
It follows that
\begin{equation*}
 \Det(A\otimes C)=\Det(A)\ast\Det(C)
\end{equation*}
for $A\in D_m(\mathbb{F})$ and $C\in D_n(\mathbb{F})$, from $\det(A\otimes C)=(\det(A))^n(\det(C))^m$.
Let $\sqrt[m]{\vphantom{b}a}$ be the $m$-th root of $a\in\mathbb{F}$. Since $\sqrt[m]{\vphantom{b}a}$
is in the (multiplicative) quotient $\Fmul/R_m$, we consider
the $n$-th root $\sqrt[n]{\sqrt[m]{a}}$, when it exists. In this case, we may write
$\sqrt[mn]{a}\equiv\sqrt[n]{\sqrt[m]{a}}$, since
\begin{equation}
 \label{eq:cong}
 G/R_{mn}(G)\cong (G/R_m(G))\,/\,R_n(G/R_m(G))
\end{equation}
where $R_m(G)$ is the subgroup of an abelian group $G$ as above. The isomorphism is simply
\begin{equation*}
 a R_{mn}(G) \mapsto (a\cdot R_m(G))\cdot R_n(G/R_m(G)).
\end{equation*}



Now we are ready to consider partial determinant-roots and their properties.
The partial determinant-root $\Det_2$ is again the block-wise determinant-root.

\begin{definition}
 Let $A,B\in M_{mn}(\mathbb{F}) = M_m(M_n(\mathbb{F})) = M_m(\mathbb{F})\otimes M_n(\mathbb{F})$
 with
 \begin{equation*}
  A = \sum_{i,j=1}^n A_{ij}\otimes E_{ij}^{[n]}, \qquad
  B = \sum_{i,j=1}^m E_{ij}^{[m]}\otimes B_{ij}
 \end{equation*}
 and assume that $A_{ij}\in D_m(\mathbb{F})$ and $B_{ij}\in D_n(\mathbb{F})$ for all $i,j$.
 The partial determinant-roots $\Det_1(A)$ and $\Det_2(B)$ are given by
 \begin{equation*}
  \Det_1(A) := \sum_{i,j=1}^n \Det(A_{ij}) E_{ij}^{[n]}, \qquad
  \Det_2(B) := \sum_{i,j=1}^m \Det(B_{ij}) E_{ij}^{[m]}.
 \end{equation*}
\end{definition}

In general, we work in the group ring $\mathbb{Z}[\Fmul/R_{m}(\Fmul)]$.
However, since we are concerned only with Kronecker products of matrices, we will express
our results in $\Fmul / R_{m}(\Fmul)$ where possible.

\begin{lemma}\label{lem:detroot}%
 The partial determinant-root obeys
 \begin{equation*}
  \Det_1(A\otimes B) = \Det(A)\,B
 \end{equation*}
 where $\Det(A)B$ is the entry-wise
 product of the entries in $B$ with $\Det(A)$.
\end{lemma}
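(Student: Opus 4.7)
The plan is to reduce this to Lemma \ref{lem:hp} via the block expansion $A\otimes B=\sum_{i,j=1}^n ((B)_{ij}A)\otimes E_{ij}^{[n]}$, which already appeared in the proof of that lemma. In this form each $(i,j)$-block of $A\otimes B$ is the scalar multiple $(B)_{ij}A$, so by the definition of $\Det_1$ we have
\begin{equation*}
 \Det_1(A\otimes B) = \sum_{i,j=1}^n \Det((B)_{ij}A)\,E_{ij}^{[n]}.
\end{equation*}
Thus everything reduces to computing $\Det((B)_{ij}A)$ for $A\in D_m(\mathbb{F})$.

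Next I would check that each block $(B)_{ij}A$ indeed lies in $D_m(\mathbb{F})$, so that the determinant-root is defined. Since $\det((B)_{ij}A)=(B)_{ij}^m\det(A)$, the factor $(B)_{ij}^m$ is a trivial $m$-th power in $\Fmul$ (or zero), and $\sqrt[m]{\det(A)}$ exists by hypothesis; the multiplicativity $\sqrt[m]{\vphantom{b}a c}=\sqrt[m]{\vphantom{b}a}\cdot\sqrt[m]{c}$ established earlier in the section (extended by $\sqrt[m]{0}=0$) then guarantees that $\sqrt[m]{(B)_{ij}^m\det(A)}$ exists, so $(B)_{ij}A\in D_m(\mathbb{F})$.

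Using the same multiplicativity, together with the identity $\sqrt[m]{a^m}=a\cdot R_m$ in the quotient $\Fmul/R_m$, I get
\begin{equation*}
 \Det((B)_{ij}A)=\sqrt[m]{(B)_{ij}^m\det(A)}=(B)_{ij}\,\sqrt[m]{\det(A)}=(B)_{ij}\,\Det(A).
\end{equation*}
Substituting back yields
\begin{equation*}
 \Det_1(A\otimes B) = \sum_{i,j=1}^n (B)_{ij}\,\Det(A)\,E_{ij}^{[n]} = \Det(A)\,B,
\end{equation*}
where the last equality is the definition of the entry-wise scaling of $B$ by $\Det(A)$.

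The only subtle step is the middle one: one must be careful that $\sqrt[m]{a^m}$ really equals $a$ as an element of $\Fmul/R_m$, and that the multiplicative property of $\sqrt[m]{\cdot}$ is invoked in the quotient group rather than as a pointwise equality in $\mathbb{F}$. Once this is accepted, the argument is purely a relabelling of Lemma \ref{lem:hp}, and the zero case $(B)_{ij}=0$ is handled by the extension $\sqrt[m]{0}:=0\cdot R_m$ so that both sides vanish.
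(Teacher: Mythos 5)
Your proposal is correct and follows essentially the same route as the paper: the same block expansion $A\otimes B=\sum_{i,j}((B)_{ij}A)\otimes E_{ij}^{[n]}$, followed by $\Det((B)_{ij}A)=(B)_{ij}\Det(A)$ via $\sqrt[m]{(B)_{ij}^m}=(B)_{ij}\cdot R_m$. The paper's proof is just a terser version of yours; your extra checks (that $(B)_{ij}A\in D_m(\mathbb{F})$ and the zero case) are sensible details the paper leaves implicit.
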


\begin{proof}
 We have
 \begin{equation*}
  A\otimes B = \sum_{i,j=1}^n ((B)_{ij}A)\otimes E_{ij}^{[n]},
 \end{equation*}
 where $B=(B)_{ij}$. It follows that
 \begin{equation*}
  \Det_1(A\otimes B) = \sum_{i,j=1}^n \Det(B_{ij}A_{ij}) E_{ij}^{[n]}
                     = \sum_{i,j=1}^n B_{ij}\Det(A_{ij}) E_{ij}^{[n]}
 \end{equation*}
 since $\sqrt[m]{B_{ij}^m} = B_{ij}\cdot R_m(\Fmul)$.
\end{proof}

\begin{theorem}
 \begin{enumerate}
  \item
   Let $A\in D_m(\mathbb{F})$ and $C\in D_n(\mathbb{F})$. Then
   \begin{equation*}
    \Det(A\otimes C) = \Det(\Det_1(A\otimes C)).
   \end{equation*}
  \item
   Let $A,B\in D_m(\mathbb{F})$ and $C,D\in M_n(\mathbb{F})$. Then
   \begin{equation*}
    \Det_1((A\otimes C)(B\otimes D)) = \Det_1(A\otimes C)\Det_1(B\otimes D).
   \end{equation*}
 \end{enumerate}
\end{theorem}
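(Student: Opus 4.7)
The plan is to reduce both parts to direct calculations using Lemma \ref{lem:detroot}, together with the standard identity $\det(A\otimes C)=\det(A)^n\det(C)^m$ for Kronecker products and the multiplicativity of $\Det$ on $D_m(\mathbb{F})$. The only subtlety is bookkeeping: each root lives in a specific quotient group $\Fmul/R_k$, and matching the outputs of the two sides in part (1) requires the isomorphism \eqref{eq:cong}.

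For part (1), I would evaluate both sides in turn. On the left, $\Det(A\otimes C)=\sqrt[mn]{\det(A)^n\det(C)^m}$ by the Kronecker determinant identity. For the right, Lemma \ref{lem:detroot} gives $\Det_1(A\otimes C)=\Det(A)\,C$, an $n\times n$ matrix whose entries are scaled by the scalar $\Det(A)\in\Fmul/R_m$. Its determinant is $\Det(A)^n\det(C)\in\Fmul/R_m$, and a second application of $\Det$ produces the $n$-th root $\sqrt[n]{\Det(A)^n\det(C)}$ in $(\Fmul/R_m)/R_n(\Fmul/R_m)$. To compare with the left-hand side I would pick representatives $a,c\in\Fmul$ with $a^m=\det(A)$ and $c^n=\det(C)$; then $(ac)^{mn}=\det(A)^n\det(C)^m$, so $ac$ represents the left-hand class in $\Fmul/R_{mn}$, while $ac$ reduced first modulo $R_m$ (giving $\Det(A)\cdot c\in\Fmul/R_m$) and then modulo $R_n(\Fmul/R_m)$ represents the right-hand class. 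Under \eqref{eq:cong} these are the same element, which proves (1).

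For part (2), I would combine the Kronecker mixed-product identity $(A\otimes C)(B\otimes D)=(AB)\otimes(CD)$ with Lemma \ref{lem:detroot} on each side:
\begin{align*}
 \Det_1\bigl((A\otimes C)(B\otimes D)\bigr) &= \Det(AB)\,(CD),\\
 \Det_1(A\otimes C)\,\Det_1(B\otimes D) &= \bigl(\Det(A)\,C\bigr)\bigl(\Det(B)\,D\bigr) = \Det(A)\Det(B)\,(CD).
\end{align*}
These two expressions agree because $\Det(AB)=\Det(A)\Det(B)$ for $A,B\in D_m(\mathbb{F})$, which was recorded immediately after the definition of $\Det$, and because the scalar factors in $\Fmul/R_m$ commute past the matrix entries. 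The hypothesis $A,B\in D_m(\mathbb{F})$ also ensures that $AB\in D_m(\mathbb{F})$, so the left-hand side is defined.

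The main obstacle is purely notational, namely tracking which quotient $\Fmul/R_k$ each intermediate object belongs to and invoking \eqref{eq:cong} with consistent representatives. Once these identifications are in place, each part collapses to a single-line computation built on Lemma \ref{lem:detroot}.
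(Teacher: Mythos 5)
Your proposal is correct and follows essentially the same route as the paper: both parts reduce to Lemma \ref{lem:detroot}, with part (1) matching the two sides via $\det(A\otimes C)=\det(A)^n\det(C)^m$ and the isomorphism \eqref{eq:cong}, and part (2) following from the mixed-product identity and $\Det(AB)=\Det(A)\Det(B)$. Your explicit choice of representatives $a,c$ just unpacks what the paper encodes in the $\ast$ product, so there is no substantive difference.
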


\begin{proof}
 Since $\Det(A\otimes C) = \Det(A)\ast\Det(C)$ and 
 \begin{align*}
  \Det(\Det_1(A\otimes C))
    &= \sqrt[n]{\det(\Det(A)C)} \\
    &= \sqrt[n]{(\Det(A))^n\det(C)} \\
    &\equiv \sqrt[mn]{(\det(A))^n(\det(C))^m}
     && \text{(by the isomorphism \eqref{eq:cong})} \\
    &= \Det(A)\ast \Det(C)
 \end{align*}
 statement (1) follows. For statement (2), Lemma \ref{lem:detroot} provides
 \begin{equation*}
  \Det_1((A\otimes C)(B\otimes D)) = \Det(AB)\, (CD).
 \end{equation*}
 Similarly,
 \begin{align*}
  \Det_1(A\otimes C)\Det_1(B\otimes D)
   &= (\Det(A)\, C)(\Det(B)\, D) 
     && \text{(over $\mathbb{Z}[\Fmul /R_m(\Fmul)]$)}\\
   &= (\Det(A)\Det(B))\, (CD) \\
   &= \Det(AB)\, (CD).
     && \qedhere
 \end{align*}
\end{proof}

\section{Conclusion}

We have provided some characterizations of matrices satisfying
\begin{gather*}
 \det(\det_1(A\otimes B)) = \det(A\otimes B), \\
 \det_1((A\otimes C)\otimes(B\otimes D)) = \det_1(A\otimes C)\det_1(B\otimes D).
\end{gather*}
This falls under the problem of partial operations which can be completed, i.e.
we have determined conditions under which the partial determinant
of Kronecker products can be completed.

Let $r:\mathbb{N}\to\mathbb{N}$ and let $g_{n}: M_{n}(\mathbb{F})\to M_{r(n)}(\mathbb{F})$
be some operation on matrices. For example, take $r(n)=1$ and $g_n(A)=\tr(A)$ to be the trace
operation, or $r(n)=n$ and $g_n(A)=A^T$ to be the matrix transpose.
Let $f_{m,n}:M_{mn}(\mathbb{F})\to M_{mr(n)}(\mathbb{F})$ be the partial
operation of $g$ defined by
\begin{equation*}
 f_{m,n}\left(\sum_{i,j=1}^n E_{ij}^{[m]}\otimes A_{ij}\right)
  = \sum_{i,j=1}^n E_{ij}^{[m]} \otimes g_{n}(A_{ij}).
\end{equation*}
The partial operation $f_{m,n}$ on a matrix $M\in M_{mn}(\mathbb{F})$ can be completed when
\begin{equation*}
 g_{mn}(M) = g_{mr(n)}(f_{m,n}(M)).
\end{equation*}
This definition of completability yields that the partial trace is the uniquely completable
linear partial operation of the trace, i.e. $f_{m,n}=\tr_2(M)$.
To see this, let $g_n$ be the trace operation, $g'_n:M_n(\mathbb{F})\to\mathbb{F}$ be linear, and
\begin{equation*}
 f_{m,n}'\left(\sum_{i,j=1}^n E_{ij}^{[m]}\otimes A_{ij}\right)
  = \sum_{i,j=1}^n E_{ij}^{[m]} \otimes g_{n}'(A_{ij})
\end{equation*}
with
\begin{equation*}
 g_{mn}(M) = g_{mr(n)}(f_{m,n}'(M)).
\end{equation*}
We must have
\begin{equation*}
 g_{mn}(E_{ij}^{[m]}\otimes E_{kl}^{[n]})
   = \delta_{ij}\delta_{kl}
   = g_{mr(n)}(E_{ij}^{[n]}\otimes g_{n}'(E_{kl}^{[n]}))
   = \delta_{ij}g_{n}'(E_{kl}^{[n]})
\end{equation*}
from which follows that $g_{n}'(E_{kl}^{[n]})=\delta_{kl}$, i.e. $g_n'=g_n=\tr$.

Consider the partial transpose, i.e. $g_n(A)=A^T$.
Let $M\in M_{mn}(\mathbb{F})$ be the matrix
\begin{equation*}
 M = \sum_{i,j=1}^n E_{ij}^{[m]}\otimes M_{ij}.
\end{equation*}
Then
\begin{equation*}
 f_{m,n}(M)
  = \sum_{i,j=1}^n E_{ij}^{[m]} \otimes g_{n}(M_{ij})
  = \sum_{i,j=1}^n E_{ij}^{[m]} \otimes M_{ij}^T
\end{equation*}
and the partial transpose is not completable, since
\begin{equation*}
 M^T = g_{mn}(M) \neq g_{mn}(f_{m,n}(M)) = \sum_{i,j=1}^n (E_{ij}^{[m]})^T \otimes M_{ij},
\end{equation*}
unless $M$ is block-wise symmetric ($M_{ij}^T=M_{ij}$).

Finally, we note the following regarding partial determinants and
their relation to to partial traces and the exponential map.
When $\mathbb{F}=\mathbb{R}$ or $\mathbb{F}=\mathbb{C}$, we have
\begin{equation*}
 \det(\exp(A)) = \exp(\tr(A))
\end{equation*}
for all square matrices $A$. 
W.-H. Steeb posed the following question in a private communication:
for which matrices $A$ is it true that
$\det_1(\exp(A)) = \exp(\tr_1(A))$? In the case of Kronecker sums,
we have a straightforward answer.

\begin{theorem}
 Let $B\in M_m(\mathbb{F})$ and $C\in M_n(\mathbb{C})$
 and let $A$ be the Kronecker sum $A = B\otimes I_n + I_m\otimes C$.
 Then $\det_1(\exp(A)) = \exp(\tr_1(A))$
 if and only if the $m$-th Hadamard and matrix powers of $\exp(C)$
 coincide, i.e. $(\exp(C))^{(m)} = (\exp(C))^m.$
\end{theorem}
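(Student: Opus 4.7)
The plan is to reduce both sides to expressions involving $\exp(B)\otimes\exp(C)$ and $\tr(B)I_n+mC$ respectively, and then invoke Lemma \ref{lem:hp} to see that the identity collapses to the Hadamard-versus-matrix power condition on $\exp(C)$.

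First I would exploit that the two summands in the Kronecker sum $A=B\otimes I_n+I_m\otimes C$ commute, so that
\begin{equation*}
 \exp(A)=\exp(B\otimes I_n)\exp(I_m\otimes C)=\exp(B)\otimes\exp(C).
\end{equation*}
Applying Lemma \ref{lem:hp} to this Kronecker product gives
\begin{equation*}
 \det_1(\exp(A))=\det(\exp(B))\,(\exp(C))^{(m)}=e^{\tr(B)}\,(\exp(C))^{(m)}.
\end{equation*}

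Next I would compute $\tr_1(A)$ directly from the definition. Writing $B\otimes I_n=\sum_i B\otimes E_{ii}^{[n]}$ yields $\tr_1(B\otimes I_n)=\tr(B)I_n$, and writing $I_m\otimes C=\sum_{i,j}(C)_{ij}I_m\otimes E_{ij}^{[n]}$ yields $\tr_1(I_m\otimes C)=mC$. Since $\tr(B)I_n$ and $mC$ commute, the exponential factors and gives
\begin{equation*}
 \exp(\tr_1(A))=\exp(\tr(B)I_n+mC)=e^{\tr(B)}\exp(mC)=e^{\tr(B)}\,(\exp(C))^m.
\end{equation*}

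Equating the two expressions and cancelling the nonzero scalar $e^{\tr(B)}$ reduces the desired identity to $(\exp(C))^{(m)}=(\exp(C))^m$, which establishes the equivalence. There is no substantial obstacle here: the argument is a direct chain of identities, and the only ingredients used beyond bookkeeping are the commutation inside the Kronecker sum (yielding the multiplicative form of the exponential), Lemma \ref{lem:hp}, and the fact that $e^{\tr(B)}\neq 0$ over $\mathbb{R}$ or $\mathbb{C}$ so that cancellation is legal.
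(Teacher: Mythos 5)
Your proof is correct and follows essentially the same route as the paper, which sketches the argument in one line via $\exp(A)=\exp(B)\otimes\exp(C)$, Lemma \ref{lem:hp}, and $\det(\exp(B))=\exp(\tr(B))$. Your explicit computation of $\tr_1(A)=\tr(B)I_n+mC$ and the cancellation of the nonzero scalar $e^{\tr(B)}$ simply fill in the details the paper leaves to the reader.
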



The proof follows from $\exp(A)=\exp(B)\otimes\exp(C)$, Lemma \ref{lem:hp}
and the fact that $\det(\exp(B)) = \exp(\tr(B))$. Assume that $n\neq 0$
in $\mathbb{F}$. Define $\Tr:M_n(\mathbb{F})\to\mathbb{F}$ by $\Tr:A\mapsto\frac{\tr(A)}{n}$,
and define the partial operation $\Tr_1$ in the natural way.
For determinant-roots we obtain
\begin{equation*}
 \Det(\exp(A)) = \exp(\Tr(A))\cdot R_n(\Fmul)
\end{equation*}
where $A\in D_n(\mathbb{F}$). Partial determinant-roots obey the
exponential-determinant-trace relation as follows.

\begin{theorem}
 Let $\mathbb{F}$ be the field $\mathbb{F}=\mathbb{R}$ of real numbers
 or the field $\mathbb{F}=\mathbb{C}$ of complex numbers,
 and let $A$ be the Kronecker sum $A = B\otimes I_n + I_m\otimes C$
 where $B\in D_m(\mathbb{F})$. Then
 \begin{equation*}
  \Det_1(\exp(A)) = (\exp(\Tr_1(A))\cdot R_m(\Fmul))\exp(C).
 \end{equation*}
\end{theorem}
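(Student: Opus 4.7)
The plan is to imitate the proof of the preceding theorem about $\det_1$ and $\tr_1$, substituting Lemma \ref{lem:detroot} for Lemma \ref{lem:hp} and $\Det$, $\Tr$ for their unnormalised counterparts. The starting point is that $B\otimes I_n$ and $I_m\otimes C$ commute: their products in either order equal $B\otimes C$. Consequently
\[
 \exp(A) = \exp(B\otimes I_n)\exp(I_m\otimes C) = (\exp(B)\otimes I_n)(I_m\otimes\exp(C)) = \exp(B)\otimes\exp(C),
\]
which places $\exp(A)$ in the form required by Lemma \ref{lem:detroot}.

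To apply that lemma I would first verify that $\exp(B)\in D_m(\mathbb{F})$ and identify $\Det(\exp(B))$. From $\det(\exp(B)) = \exp(\tr(B))$ and the explicit $m$-th root $\exp(\tr(B)/m) = \exp(\Tr(B))$, one reads off $\Det(\exp(B)) = \exp(\Tr(B))\cdot R_m(\Fmul)$. Lemma \ref{lem:detroot} then gives
\[
 \Det_1(\exp(A)) = \Det(\exp(B))\,\exp(C) = \bigl(\exp(\Tr(B))\cdot R_m(\Fmul)\bigr)\exp(C),
\]
with the scalar coset acting entry-wise on the matrix $\exp(C)$.

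It remains to re-express the scalar coset in terms of $\Tr_1(A)$. Using linearity of the partial trace together with $\tr_1(B\otimes I_n) = \tr(B) I_n$ and $\tr_1(I_m\otimes C) = mC$, the normalised partial trace $\Tr_1 = \tfrac{1}{m}\tr_1$ evaluates as $\Tr_1(A) = \Tr(B) I_n + C$. Since the two summands commute, $\exp(\Tr_1(A)) = e^{\Tr(B)}\exp(C)$, and the $R_m(\Fmul)$-ambiguity of this expression is carried entirely by the scalar factor $e^{\Tr(B)}$. Hence $\exp(\Tr_1(A))\cdot R_m(\Fmul)$ reduces, after splitting off the matrix factor $\exp(C)$, to $\exp(\Tr(B))\cdot R_m(\Fmul)$; substituting back yields the claimed identity.

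The main obstacle I anticipate is purely notational: the formula juggles honest matrices, scalar cosets in $\Fmul/R_m(\Fmul)$, and matrices whose entries live in $\Fmul/R_m(\Fmul)$, so one has to be careful about which factor carries the $m$-th-root ambiguity at each step. Beyond that bookkeeping, the computation is very short — the exponential decomposition $\exp(A) = \exp(B)\otimes\exp(C)$ does all the real work, exactly as in the $\det_1$/$\tr_1$ analogue immediately preceding the theorem.
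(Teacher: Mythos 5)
Your decomposition $\exp(A)=\exp(B)\otimes\exp(C)$, the appeal to Lemma \ref{lem:detroot}, and the identification $\Det(\exp(B))=\exp(\Tr(B))\cdot R_m(\Fmul)$ are exactly the route the paper intends: it states this theorem without an explicit proof, but the surrounding remarks (the $\det_1$/$\tr_1$ analogue and the identity $\Det(\exp(A))=\exp(\Tr(A))\cdot R_n(\Fmul)$) point to precisely these ingredients. Your computation $\Tr_1(A)=\Tr(B)I_n+C$ is also correct.

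The weak point is the final step. You correctly arrive at
\begin{equation*}
 \Det_1(\exp(A)) = \bigl(\exp(\Tr(B))\cdot R_m(\Fmul)\bigr)\exp(C),
\end{equation*}
but the passage from there to the stated identity is not an algebraic deduction: ``splitting off the matrix factor $\exp(C)$'' from $\exp(\Tr_1(A))=e^{\Tr(B)}\exp(C)$ and then multiplying it back on is only legitimate if the right-hand side of the theorem is \emph{defined} to be parsed that way. Read as an honest product of the coset-valued matrix $\exp(\Tr_1(A))\cdot R_m(\Fmul)$ with the matrix $\exp(C)$, the right-hand side would carry an extra factor of $\exp(C)$. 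Indeed, your own computation shows that the entry-wise coset matrix $\exp(\Tr_1(A))\cdot R_m(\Fmul)$ already equals $\Det_1(\exp(A))$ on its own, since its $(i,j)$ entry is $e^{\Tr(B)}(\exp C)_{ij}\cdot R_m(\Fmul)$. So either the trailing $\exp(C)$ in the statement is redundant under the entry-wise convention of Lemma \ref{lem:detroot} (with the scalar coset $e^{\Tr(B)}\cdot R_m(\Fmul)$ acting on $\exp(C)$, which is exactly what your penultimate display establishes), or the literal matrix-product reading makes the identity fail. You should state explicitly which reading you adopt rather than gesture at ``splitting off'' a factor; with that clarification the argument is complete and coincides with the paper's intended proof.
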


\section*{Acknowledgments}

The author is supported by the National Research Foundation (NRF), South Africa.
This work is based on the research supported in part by the National Research
Foundation of South Africa (Grant Numbers: 105968). Any opinions, findings and
conclusions or recommendations expressed is that of the author(s), and the NRF
accepts no liability whatsoever in this regard.

\bibliographystyle{amsplain}
\bibliography{references}

\end{document}